\def\R{\mathbb R}
\def\Z{\mathbb Z}
\newtheorem{thm}{Theorem}[section]
\newtheorem{lemma}[thm]{Lemma}
\newtheorem{cor}[thm]{Corollary}
\newtheorem{prop}[thm]{Proposition}
\theoremstyle{remark}
\theoremstyle{definition}
\newtheorem{defi}[thm]{Definition}
\title[Free-boundary minimal surfaces in the 3d Schwarzschild]{On free-boundary minimal surfaces in the Riemannian Schwarzschild manifold}
\author{Rafael Montezuma}
\thanks{\nonumber \\ Mathematics Department, Princeton University \\ Address: Fine Hall, Washington Road, Princeton NJ 08544-1000 USA \\ Contact: rcabral@math.princeton.edu or +1(609)258-4233}
\begin{document}

\begin{abstract}
{ {Is it possible to obtain unbounded minimal surfaces in certain asymptotically flat 3-manifolds as a limit of solutions to a natural mountain pass problem with diverging boundaries? In this work, we give evidence that this might be true by analyzing related aspects in the case of the exact Riemannian Schwarzschild manifold. 

More precisely, we observe that the simplest minimal surface in this space has Morse index one. We prove also a relationship between the length of the boundary and the density at infinity of general minimal surfaces satisfying a free-boundary condition along the horizon.}}
\end{abstract}
\maketitle
\setcounter{tocdepth}{1}

\section{Introduction}\label{introduction}
In this paper we study analytic and geometric properties of minimal surfaces in the three-dimensional Riemannian Schwarzschild manifold that meet the horizon orthogonally. This space is a rotationally symmetric space-like slice of a exact solution to the Einstein field equations, which is a fundamental example in the study of static black holes. The study of minimal surfaces in spaces that are relevant in general relativity is motivated by the crucial role that such objects play in the proof of the positive mass theorem by Schoen and Yau \cite{SchYau}. 

In this article we compute the Morse index of a rotationally symmetric totally geodesic section of the Schwarzschild metric with respect to variations that are tangential along the horizon. We derive also a monotonicity formula for general free-boundary minimal surfaces in this space, and apply it to obtain a relationship between its boundary length and density at infinity. More precisely, we consider, for each $m\geq 0$, the three-dimensional domain
\begin{equation*}
M = \{x = (x_1, x_2, x_3) \in \R^3 : r \geq m/2\},
\end{equation*} 
endowed with the Riemannian metric 
\begin{equation*}
g = \bigg( 1+\frac{m}{2r}\bigg)^4 \delta,
\end{equation*}
where $r =r(x) = |x|$ represents the Euclidean distance from $x\in M$ to the origin, and $\delta$ denotes the Euclidean flat metric on $M$. In this space, we consider the totally geodesic Euclidean coordinate plane through $\{0\}$, i.e.
\begin{equation*}
\Sigma_0 = \{x \in M : x_3 =0\}. 
\end{equation*} 
Since the metric $g$ is conformal to the Euclidean, the surface $\Sigma_0$ meets the horizon $\{x \in M: r = m/2\} = \partial M$  orthogonally along $\partial \Sigma_0$. 

In the theory of black holes in general relativity, the parameter $m$ used in the preceding paragraph to introduce our spaces is called the ADM-mass, see \cite{SchYau}. Therefore, $\Sigma_0$ is a properly embedded free-boundary minimal surface of the Schwarzschild manifold of ADM-mass $m$, for all $m>0$. More precisely, the term properly embedded says, in particular, that the boundary $\partial \Sigma_0$ coincides with $\Sigma_0\cap \partial M$, while the terms minimal surface and free-boundary mean that its mean-curvature with respect to $g$ vanishes and that $\Sigma_0$ meets $\partial M$ orthogonally, respectively. Note that $\Sigma_0$ is simply a flat plane if $m=0$.

Throughout this work, the surface $\Sigma_0$ and any other surface obtained from this one by a space rotation will be called a plane through the origin, even though they are topological annuli.



One of the purposes of this paper is the computation of the Morse index of planes through the origin; i.e. the maximum number of directions in which the surface can be deformed in such a way that its area is decreased. 

We obtained the following result.

\begin{thm}\label{thm-A}
The Morse index of a plane through the origin is one.
\end{thm}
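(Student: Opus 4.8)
The plan is to use the rotational symmetry to reduce the index computation to a family of one–dimensional eigenvalue problems, after first simplifying the Jacobi operator and its boundary condition. Since $\Sigma_0$ is totally geodesic its second fundamental form vanishes, so for a normal variation $u\nu$ the second variation of area is
\[
Q(u,u)=\int_{\Sigma_0}\big(|\nabla u|^2-\operatorname{Ric}(\nu,\nu)\,u^2\big)\,dA-\int_{\partial\Sigma_0}\mathrm{II}(\nu,\nu)\,u^2\,ds,
\]
where $\mathrm{II}$ is the second fundamental form of the horizon. Two facts collapse this. First, the Schwarzschild metric is scalar flat ($\Phi=1+m/(2r)$ is harmonic and $g=\Phi^4\delta$), so the traced Gauss equation gives $\operatorname{Ric}(\nu,\nu)=\tfrac12 R_g-K_{\Sigma_0}=-K_{\Sigma_0}$, whence $Q(u,u)=\int_{\Sigma_0}\big(|\nabla u|^2+K_{\Sigma_0}u^2\big)\,dA-\int_{\partial\Sigma_0}\mathrm{II}(\nu,\nu)u^2\,ds$. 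Second, the horizon $\partial M$ is the fixed–point set of the isometric inversion $x\mapsto (m/2)^2 x/|x|^2$, hence is totally geodesic, so $\mathrm{II}\equiv0$ and the free–boundary condition degenerates to the Neumann condition $\partial u/\partial\eta=0$ on $\partial\Sigma_0$. The index is therefore the number of negative eigenvalues of $-\Delta_{\Sigma_0}+K_{\Sigma_0}$ with Neumann data.

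Next I would exploit the symmetry directly. In polar coordinates the induced metric is $\Phi^4(d\rho^2+\rho^2 d\theta^2)$ on $\{\rho\ge m/2\}$, and computing the Gauss curvature of this conformally flat metric yields the clean identity $K_{\Sigma_0}\,\Phi^4=-m/\big(\rho(\rho+m/2)^2\big)$. Decomposing $u=\sum_k f_k(\rho)e^{ik\theta}$ diagonalizes $Q$ into the Sturm–Liouville forms
\[
Q_k(f)=\int_{m/2}^{\infty}\Big(\rho\,(f')^2+\tfrac{k^2}{\rho}f^2-\tfrac{m}{(\rho+m/2)^2}f^2\Big)\,d\rho,
\]
with Neumann condition at $\rho=m/2$ and positive weight $W=\rho\Phi^4$ in the associated eigenvalue equation. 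Since $Q_k=Q_0+\int (k^2/\rho)f^2\ge Q_0$, the modes increase with $k$, so only $k=0$ and $k=1$ need to be analysed.

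I would then settle the three regimes. For $k\ge1$: rotations about the $x_1$– and $x_2$–axes are isometries preserving the horizon, so their normal components are Jacobi fields satisfying the Neumann condition; explicitly $f^{(1)}(\rho)=\rho\Phi^2=(2\rho+m)^2/(4\rho)$ solves the $k=1$ equation at eigenvalue $0$, is strictly positive, and has $(f^{(1)})'(m/2)=0$. Being a positive Neumann solution it is the ground state, so $Q_1\ge0$, and monotonicity gives $Q_k\ge0$ for all $k\ge1$. For $k=0$: the constant function (suitably cut off at infinity) gives $Q_0(1)=-1<0$, producing at least one negative eigenvalue. To see there is exactly one, observe that $f_1(\rho)=(\rho-m/2)/(\rho+m/2)$ solves the $k=0$ Jacobi equation $(\rho f_1')'+\tfrac{m}{(\rho+m/2)^2}f_1=0$, is positive on $(m/2,\infty)$, and vanishes only at the endpoint; the ground–state substitution $f=f_1 h$ then rewrites the Dirichlet form as $\int \rho f_1^2 (h')^2\ge0$, so the $k=0$ Dirichlet index is $0$, and Neumann–Dirichlet interlacing forces the Neumann index to be at most $1$. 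Summing the contributions gives index equal to $1$.

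The main obstacle is the non–compactness of $\Sigma_0$: one must justify that the index is finite and that the variational and oscillation arguments above are legitimate. This rests on verifying that $K_{\Sigma_0}\sim -m\rho^{-3}$ decays at infinity, so that the essential spectrum of $-\Delta_{\Sigma_0}+K_{\Sigma_0}$ is $[0,\infty)$ and all negative spectrum is discrete; and on careful bookkeeping of the boundary terms at $\rho=m/2$ and at infinity in the ground–state substitution, where the subtlety is that $f_1$ vanishes at the horizon while Neumann eigenfunctions do not, which is exactly why the substitution must be run on the Dirichlet side and combined with interlacing. Pinning down the threshold behaviour of the two zero–energy solutions — $f_1$ bounded and its logarithmically growing companion — which underlies the oscillation/interlacing count, is the delicate analytic point.
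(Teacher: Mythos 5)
Your proposal is correct: I checked the key identities and they all hold. The potential in your Sturm--Liouville form matches the paper's, since $-K_{\Sigma_0}\Phi^4\rho = m/(\rho+m/2)^2$ agrees with the paper's coefficient $\frac{m}{r^2}\big(1+\frac{m}{2r}\big)^{-2}$ in equation (\ref{eq-pol-k1}); the function $f^{(1)}(\rho)=\rho\Phi^2$ is indeed the normal component $g(X,N)$ of the Killing rotation about a horizontal axis, solves the $k=1$ zero-eigenvalue equation, is positive, and satisfies $(f^{(1)})'(m/2)=\frac{(2\rho+m)(2\rho-m)}{4\rho^2}\big|_{\rho=m/2}=0$; the function $f_1=(2\rho-m)/(2\rho+m)$ solves the $k=0$ equation and vanishes only at the horizon; and $Q_0(1)=-m\int_{m/2}^\infty(\rho+m/2)^{-2}d\rho=-1$, with the logarithmic cutoff costing arbitrarily little energy. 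Your route, however, is genuinely different from the paper's. The paper runs everything through the Riccati substitution $\gamma=v'/v$ (with $v=\sqrt{r}\,u_k$) and comparison with explicit barrier solutions: a barrier $\psi$ rules out all modes $k\neq 0$ with $\lambda\le 0$, and the explicit family $\psi_c$ of $\lambda=0$, $k=0$ solutions shows any radial negative eigenfunction has a sign, hence is the ground state; index $\ge 1$ comes from the explicit Neumann solution $v_0$ changing sign. You instead kill the $k\ge 1$ modes with a geometrically natural positive Jacobi field (Picone/ground-state substitution), cap the $k=0$ Neumann index by $1$ via the Dirichlet-positive solution $f_1$ plus Neumann--Dirichlet interlacing, and produce the negative direction variationally. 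What your approach buys is conceptual economy and uniformity in $R$: each truncated problem $\Sigma_0(R)$ is handled by the same variational argument, with no singularity bookkeeping for $\gamma$. What the paper's approach buys is Theorem \ref{cor-max-stab} essentially for free: the explicit Neumann solution $v_0(r)=\sqrt{2r/m}\big(1-\frac{2r-m}{2r+m}\log\sqrt{2r/m}\big)$ locates the maximal stability radius as the root of $\log\sqrt{2R/m}=\frac{2R+m}{2R-m}$, which your interlacing argument does not directly identify (though your ``logarithmically growing companion'' of $f_1$ is exactly $v_0/\sqrt{r}$ up to normalization, so the information is recoverable). One last remark: your concerns about essential spectrum are unnecessary under the paper's definition of index as the limit of the indices of the truncations $\Sigma_0(R)$, since all your mode-by-mode bounds already hold uniformly on every truncation.
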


In the process of proving the above statement, we determine the maximal annular domain containing the horizon on which a plane through the origin is stable with respect to variations that are tangential on the boundary of $M$ and vanish along the outer circular component.

\begin{thm}\label{cor-max-stab}
The maximal annular domain of stability 
\begin{equation*}
\Sigma_0(R) = \Sigma_0 \cap \{|x|\leq R\}
\end{equation*}
of the plane through the origin, $\Sigma_0$, is that for which the value of $R$ satisfies 
\begin{equation*}
\log \sqrt{\frac{2R}{m}} = \frac{2R+m}{2R-m}.
\end{equation*}
\end{thm}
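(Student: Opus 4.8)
The plan is to analyze the second variation of area for normal variations of $\Sigma_0(R)$ and to locate the radius $R$ at which the associated stability operator first develops a zero eigenvalue. First I would record the index form for a free-boundary minimal surface $\Sigma$,
\[
Q(\phi,\phi) = \int_\Sigma\left(|\nabla\phi|^2 - (|A|^2 + \mathrm{Ric}(N,N))\phi^2\right)dA - \int_{\partial\Sigma}\mathrm{II}_{\partial M}(N,N)\,\phi^2\,ds,
\]
where $N$ is the unit normal and $\mathrm{II}_{\partial M}$ is the second fundamental form of the horizon. Two features of the Schwarzschild geometry simplify this form decisively: the horizon $\{r = m/2\}$ is the fixed-point set of the inversion isometry $x\mapsto (m/2)^2 x/|x|^2$, hence totally geodesic, so the boundary integral vanishes and the free-boundary condition for Jacobi fields reduces to the Neumann condition $\partial_\eta\phi = 0$ along $\partial M$; and the metric $g$ is scalar-flat (since $u = 1 + m/(2r)$ is harmonic), so the Gauss equation together with $|A|^2 = 0$ for the totally geodesic $\Sigma_0$ gives $|A|^2 + \mathrm{Ric}(N,N) = -K_{\Sigma_0}$, where $K_{\Sigma_0}$ is the intrinsic Gaussian curvature. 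Thus the stability question is governed entirely by the operator $-\Delta_{\Sigma_0} + K_{\Sigma_0}$, with Neumann data on $\{\rho = m/2\}$ and Dirichlet data on $\{\rho = R\}$.

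Next I would exploit the rotational symmetry. Writing the induced metric in polar coordinates as $u^4(d\rho^2 + \rho^2 d\theta^2)$ with $u = (2\rho+m)/(2\rho)$, a separation of variables $\phi = f(\rho)\cos(k\theta)$ shows that the angular modes contribute the manifestly nonnegative, $k$-increasing term $k^2\rho^{-2}f^2$ to the Rayleigh quotient; hence the lowest eigenvalue is attained by a rotationally symmetric function, and it suffices to study the radial problem. A direct computation reduces the radial Jacobi equation $-\Delta_{\Sigma_0}\phi + K_{\Sigma_0}\phi = 0$ to
\[
(\rho\phi')' + \frac{4m}{(2\rho+m)^2}\,\phi = 0.
\]
The crucial observation is that this equation admits the explicit solution $\phi_1 = (2\rho-m)/(2\rho+m)$, which vanishes at the horizon; reduction of order then yields a second, independent solution $\phi_2 = \phi_1\big(\log\rho - 4m/(2\rho-m)\big)$, which extends smoothly to $\rho = m/2$ with $\phi_2(m/2) = -2$.

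Finally I would impose the boundary conditions on the general solution $\phi = a\phi_1 + b\phi_2$. Enforcing the Neumann condition $\phi'(m/2) = 0$ fixes the ratio $a/b$ through the values $\phi_1'(m/2) = 1/m$ and $\phi_2'(m/2) = (\log(m/2)+2)/m$, producing the distinguished radial Jacobi field meeting the horizon orthogonally. The outer Dirichlet condition $\phi(R) = 0$ then forces $\log\rho - 4m/(2\rho-m) = \log(m/2) + 2$ at $\rho = R$, which rearranges to exactly $\log\sqrt{2R/m} = (2R+m)/(2R-m)$. To conclude that this root is the threshold of stability I would note that the right-hand side is strictly decreasing and the left-hand side strictly increasing for $R > m/2$, so the equation has a unique solution $R_\ast$; combined with the monotonicity of the first mixed eigenvalue $\lambda_1(R)$ in $R$, this identifies $\Sigma_0(R_\ast)$ as the maximal stable annular domain.

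I expect the main obstacle to be the explicit integration of the Jacobi equation: guessing (or deriving) the closed-form solution $\phi_1 = (2\rho-m)/(2\rho+m)$ is what makes the boundary-value analysis tractable, and without it the transcendental equation could not be pinned down in closed form. A secondary point requiring care is the correct passage from the free-boundary condition to a Neumann condition, which hinges on the horizon being totally geodesic, together with the verification that the resulting radial Jacobi field has no interior zeros on $(m/2, R_\ast)$.
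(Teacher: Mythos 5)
Your proposal is correct, and it arrives at exactly the paper's transcendental equation through the same underlying object: the explicit radial Jacobi field satisfying the Neumann condition at the horizon. Indeed, your solution $\phi = \phi_1\bigl(\log\rho - \tfrac{4m}{2\rho-m} - \log(m/2) - 2\bigr)$ coincides, up to normalization, with the paper's function $v_0(r)/\sqrt{r}$, where $v_0(r) = \sqrt{2r/m}\bigl(1 - \tfrac{2r-m}{2r+m}\log\sqrt{2r/m}\bigr)$; I checked that $\phi_1 = \tfrac{2\rho-m}{2\rho+m}$ does solve $(\rho\phi')' + \tfrac{4m}{(2\rho+m)^2}\phi = 0$, that your reduction-of-order integral $\int\frac{d\rho}{\rho\phi_1^2} = \log\rho - \tfrac{4m}{2\rho-m}$ is right, and that your potential $-K_{\Sigma_0}$ agrees with the paper's $\tfrac{m}{r^3}e^{-\varphi} = \tfrac{4m}{r(2r+m)^2}$. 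Where you differ from the paper is in the technical scaffolding. The paper substitutes $v = \sqrt{r}\,u$, passes to the Riccati variable $\gamma = v'/v$, and handles the angular modes $k \neq 0$ by a comparison argument against explicit Riccati solutions; you instead dispose of the angular modes by monotonicity of the Rayleigh quotient in $k^2$ (standard and perfectly adequate here), and you obtain the zero-eigenvalue solution by guessing $\phi_1$ and reducing order, rather than by exhibiting the answer outright. Your route is more elementary and self-contained for this particular theorem; the paper's Riccati machinery earns its keep because it simultaneously serves the Morse index computation of Theorem 1.1. One point you flag but do not close: to conclude $\lambda_1(R_*) = 0$ (rather than a higher eigenvalue vanishing) you need the Jacobi field to have no interior zeros on $(m/2, R_*)$. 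This is immediate from your own formulas, since the bracket $\log\rho - \tfrac{4m}{2\rho-m}$ has derivative $\tfrac{1}{\rho} + \tfrac{8m}{(2\rho-m)^2} > 0$, hence is strictly increasing and vanishes exactly once; combined with strict domain monotonicity of the mixed Neumann--Dirichlet first eigenvalue, this completes the identification of $R_*$ as the stability threshold.
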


The approximate value of $R$ is $5.5$ times $m$. The proofs of these results rely on a ODE analysis. Similar methods were applied recently by Smith and Zhou \cite{SmiZhou} in their computation of the Morse index of the free-boundary critical catenoid. Proofs of that result were independently obtained by Tran \cite{Tran} and Devyver \cite{Dev}, and generalized to higher dimensions in \cite{SmStTrZh}.

Next, we state a result that relates the length of the boundary and the density at infinity of general free-boundary minimal surfaces $\Sigma$. The density at infinity is a number that represents the asymptotic behavior at infinity of the ratio between the area of the surface inside a given ball centered at the horizon, the set of points at distance at most a certain constant from $\partial M$, and the area of the portion of a plane through the origin in the same ball. This is a natural generalization of the notion of density at infinity for minimal submanifolds in Euclidean space. The free-boundary condition implies that the ratio above approaches the length of $\partial \Sigma$ divided by the length of a great circle in the horizon, as the ball degenerates to the horizon. Using a monotonicity argument, we obtain:


\begin{thm}\label{thm-length-bound}
Let $\Sigma$ be a properly embedded minimal surface in $M$ that meets $\partial M$ orthogonally. Then, the length of the boundary of $\Sigma$ satisfies
\begin{equation*}
|\partial \Sigma| \leq 4\pi m \Theta (\Sigma).
\end{equation*}
Moreover, if equality holds $\Sigma$ is a plane through the origin and $\Theta (\Sigma)=1$.
\end{thm}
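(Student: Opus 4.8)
The plan is to prove the inequality as a two-ended monotonicity formula, comparing $\Sigma$ with a plane through the origin and reading off the boundary-length ratio at the horizon and the density $\Theta(\Sigma)$ at infinity. I would write the Schwarzschild metric in geodesic-polar form from the horizon, $g = d\rho^2 + \psi(\rho)^2\, g_{S^2}$, where $\rho = \mathrm{dist}_g(\cdot,\partial M)$ and $\psi(\rho)$ is the area radius; here $\psi(0)=2m$, so the horizon great circle has length $2\pi\psi(0)=4\pi m$, and $\psi'\geq 0$. Set $\hat A(t) = \int_{\Sigma\cap\{\rho\le t\}}\psi'(\rho)\,dA$ and $J(t) = \int_{\Sigma\cap\{\rho=t\}}|\nabla_\Sigma\rho|\,ds$. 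Applying the first variation of area to the radial field $X=\psi(\rho)\,\partial_\rho$ on $\Sigma\cap\{\rho\le t\}$, using minimality together with the warped-product identity $\mathrm{tr}_\Sigma\mathrm{Hess}\,\rho=\tfrac{\psi'}{\psi}(2-|\nabla_\Sigma\rho|^2)$, the bulk integrand collapses to exactly $2\psi'(\rho)$, because the choice $\xi=\psi$ cancels all the $|\nabla_\Sigma\rho|^2$ terms. The free-boundary hypothesis enters decisively on the inner boundary: since $\Sigma$ meets $\partial M$ orthogonally, the outward conormal there is $-\partial_\rho$, so the horizon term is the clean expression $-\psi(0)|\partial\Sigma|$. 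This produces the master identity $2\hat A(t) = -\psi(0)|\partial\Sigma| + \psi(t)J(t)$.

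I would then convert this into a monotone quantity. Differentiating the identity and using the coarea formula $\hat A'(t)=\psi'(t)\int_{\Sigma\cap\{\rho=t\}}|\nabla_\Sigma\rho|^{-1}\,ds$ together with the elementary inequality $\int|\nabla_\Sigma\rho|^{-1}-\int|\nabla_\Sigma\rho| = \int\frac{|\nabla^\perp\rho|^2}{|\nabla_\Sigma\rho|}=:E(t)\ge 0$, one obtains the first-order relation $\hat A' = \tfrac{2\psi'}{\psi}\hat A + \tfrac{\psi'\psi(0)}{\psi}|\partial\Sigma| + \psi' E$. The homogeneous part is solved by $\psi^2$, so multiplying by the integrating factor $\psi^{-2}$ and recognizing $\tfrac{\psi'\psi(0)}{\psi^3}=-\tfrac{\psi(0)}{2}\tfrac{d}{dt}\psi^{-2}$ gives
\[
\frac{d}{dt}\,\Phi(t) = \frac{\psi'(t)}{\psi(t)^2}\,E(t)\ge 0,\qquad \Phi(t) := \frac{\hat A(t)+\tfrac12\psi(0)\,|\partial\Sigma|}{\psi(t)^2}.
\]
Thus $\Phi$ is non-decreasing, and the monotonicity is governed by the manifestly non-negative error term $E(t)$.

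Next I would evaluate the two ends. As $t\to 0^+$ the weighted area $\hat A(t)\to 0$ while $\psi(t)^2\to\psi(0)^2=4m^2$, so $\Phi(0^+)=|\partial\Sigma|/(4m)$. As $t\to\infty$, comparing $\hat A$ with the corresponding quantity $\hat A_0(t)=\pi(\psi(t)^2-\psi(0)^2)$ for a plane through the origin (for which $|\nabla^\perp\rho|\equiv 0$), one checks that $\hat A(t)/\hat A_0(t)$ and the ordinary area ratio share the same limit $\Theta(\Sigma)$ while $\hat A_0(t)/\psi(t)^2\to\pi$; hence $\Phi(\infty)=\pi\,\Theta(\Sigma)$. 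The monotonicity $\Phi(0^+)\le\Phi(\infty)$ is then precisely $|\partial\Sigma|\le 4\pi m\,\Theta(\Sigma)$. For the rigidity statement, equality forces $\Phi$ to be constant, hence $E(t)\equiv 0$, i.e. $\nabla^\perp\rho\equiv 0$ and $\partial_\rho$ is everywhere tangent to $\Sigma$; thus $\Sigma$ is swept out by radial geodesics issuing from its boundary, a cone over $\partial\Sigma\subset\partial M$. Minimality forces this cone to be over a great circle, i.e. a plane through the origin, for which $\Theta=1$.

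The main obstacle I anticipate is not any single estimate but the bookkeeping at the two ends: choosing the vector field so that the bulk deviation vanishes \emph{and} the horizon boundary term is absorbed into a single monotone quantity with the correctly normalized limits. The choice $\xi=\psi$ is what makes the bulk collapse, while the integrating factor $\psi^{-2}$ and the additive constant $\tfrac12\psi(0)|\partial\Sigma|$ are dictated by the need to recover $|\partial\Sigma|/(4m)$ at $\rho=0$. A secondary technical point is justifying the asymptotic identification $\lim\hat A/\hat A_0=\Theta(\Sigma)$ and the existence and finiteness of $\Theta(\Sigma)$, which should follow from the assumed proper embeddedness together with the asymptotically flat decay of $g$, ensuring that the weighted and unweighted area ratios converge to the same value.
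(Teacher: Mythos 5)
Your proposal is correct and takes essentially the same route as the paper: the radial field $\psi(\rho)\,\partial_\rho$ whose tangential divergence collapses to $2\psi'$ by minimality, the monotone quantity $\bigl(\hat A(t)+m|\partial\Sigma|\bigr)/\psi(t)^2$ with error term $\int \frac{\psi'}{\psi^2}|\partial_\rho^\perp|^2$, the endpoint values $|\partial\Sigma|/(4m)$ at the horizon and $\pi\Theta(\Sigma)$ at infinity, and rigidity via $\partial_\rho^\perp\equiv 0$ plus the classification of minimal cones are exactly the content of Proposition \ref{prop-monotonicity}, Proposition \ref{prop-density-oo}, and Proposition \ref{prop-minimal-cone}. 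The only cosmetic difference is that you obtain monotonicity by differentiating a first-variation identity (coarea formula plus an integrating factor), whereas the paper integrates the divergence of a radially cut-off version of the same conformal field; the two technical points you defer --- identifying the weighted area ratio limit with $\pi\Theta(\Sigma)$ when $\Theta(\Sigma)$ is finite, and showing minimal cones are cones over great circles --- are precisely the paper's Propositions \ref{prop-density-oo} and \ref{prop-minimal-cone}.
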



One of the major arguments in Schoen and Yau's proof of the positive mass theorem is a non-existence result of stable minimal surfaces arising from a limit of Plateau solutions with certain diverging circular boundaries in a space with positive scalar curvature. More recent developments show that asymptotically flat three-manifolds with positive scalar curvature or with non-negative scalar curvature and certain Schwarzschild asymptotics contain no embedded unbounded stable minimal surfaces; for more details see the works of Carlotto \cite{Car} and Carlotto, Chodosh, and Eichmair \cite{CarChoEic}. 

On the other hand, Chodosh and Ketover \cite{ChoKet} proved existence of many complete properly embedded minimal planes in asymptotically flat three-manifolds containing no closed embedded minimal surfaces. This was later generalized by Mazet and Rosenberg \cite{MazRos}. In the work \cite{ChoKet}, the authors apply degree theoretic methods, what makes it hard to give explicit estimates on the Morse index of such planes. In the same paper, they suggest also the naturally related search for unbounded free-boundary minimal surfaces in asymptotically flat manifolds with closed minimal boundaries, and the study of such surfaces in the exact Schwarzschild metric. In the present work, we obtain some progress on a basic question related to the second problem. In particular, we give evidence that under some natural hypothesis it is possible to apply min-max constructions to obtain unbounded free-boundary minimal surfaces as limits of index one minimal surfaces with partially free-boundary. Such a construction would be similar to that applied by Carlotto and De Lellis in their construction of min-max geodesics in asymptotically conical surfaces, see \cite{CarDeL}, using a suited analog three-dimensional min-max theory such as in \cite{DeLRam} and \cite{Mon-boundary}, combined with the free-boundary min-max of Li and Zhou \cite{LiZhou}. It would require also an application of the min-max index estimates, as obtained by Marques and Neves in \cite{MN-index}.

Min-max minimal surfaces have been successfully constructed in different types of non-compact spaces, such as in \cite{Mon-JDG}, \cite{c-h-m-r}, \cite{KetZho}, and \cite{ChaLio}. But the methods applied in those cases find minimal surfaces of finite area only, and possibly closed. In particular, they cannot be adapted to the present setting. 

Despite the Morse index of the plane through the origin is known, we are still not able to verify whether this surface is indeed a limit of solutions to a natural min-max problem with diverging fixed boundaries. We believe that Theorem \ref{thm-length-bound} can be applied for this purpose. In order to prove this theorem, we obtain a monotonicity formula for minimal surfaces in the Schwarzschild manifold, see Proposition \ref{prop-monotonicity}. The formula is analogous to that for minimal surfaces in Euclidean space, and relates the integral of the static potential over a truncated portion of the surface with the corresponding area of the plane through the origin. Our methods share similarities with those applied by Brendle \cite{Bre} in his classification of closed embedded constant mean-curvature surfaces in warped product manifold, and Volkmann \cite{Vol} in his study of free-boundary surfaces with square integrable mean curvature.




\section{The Morse index of the plane through the origin}\label{sect-index}

In this section we prove Theorems \ref{thm-A} and \ref{cor-max-stab}; it is divided into three subsections. In the first we introduce the Jacobi operator, give the definition of Morse index, and compute it in the case of a plane through the origin in the Schwarzschild manifold. The second subsection is devoted to a separation of variables argument, and the reduction of an eigenvalue problem to a system of Ricatti equations. In the last subsection, we study these equations and present the proofs of the results stated in the introduction.

\subsection{The second variation formula and the stability operator}

In this section we introduce the stability operator associated to the quadratic form representing the second derivative of the area functional, and compute this operator in details in the case of the free-boundary minimal surface $\Sigma_0$ of the introduction. For more details, see the discussion in section 2 of \cite{AmCaSh2}.

Let $\Sigma$ be a properly embedded surface in $(M^3, g)$ and $R>0$ be fixed. Let us consider class $C^1$ vector fields $X$ in $M$ that are tangential along $\partial M$ and supported in the domain $\{x \in M : |x|\leq R\}$. We use $\{\psi(t,\cdot)\}$ to denote the one-parameter family of diffeomorphisms associated with $X$, and use it to obtain a variation of $\Sigma$ with variational vector field $X$, i.e. we consider $\psi(t, \Sigma) = \{\psi(t, x) : x\in \Sigma\}$. The first derivative of the area functional in the direction of $X$ can be computed as
\begin{equation*}
\frac{d}{dt} \bigg|_{t=0} area_g(\psi(t, \Sigma)) = \int_{\Sigma} g(X, H) + \int_{\partial \Sigma} g(X, \nu),
\end{equation*}
where $H$ and $\nu$ denote the mean-curvature and outward pointing unit conormal vectors of $\Sigma$, respectively. It follows from this formula that free-boundary minimal surfaces are precisely the critical points of the area functional with respect to tangential variations.

Let $N$ denote a globally defined unit normal vector field along $\Sigma$. From now on, we restrict our attention to smooth variation vector fields that are normal to $\Sigma$, i.e. along the surface $X = u N$ for some class $C^{\infty}$ function $u$ in $\Sigma$. The free-boundary condition implies that $X$ is an admissible tangential variation vector field. Assuming that $\Sigma$ satisfies this condition and is minimal, the second derivative of the area can be computed as
\begin{equation*}
\frac{d^2}{dt^2}\bigg|_{t=0} area_g(\psi(t, \Sigma)) = Q(u, u),
\end{equation*}
where $Q(\cdot, \cdot)$ is the quadratic form given by
\begin{equation}\label{quadratic form}
Q(u, u) = \int_{\Sigma} (|\nabla_g u|^2_g - (Ric_g(N,N) + |A_{\Sigma}|^2_g)u^2) + \int_{\partial \Sigma} A_{\partial M}(N, N) u^2,
\end{equation}
where $A_{\Sigma}$ and $A_{\partial M}$ denote the second fundamental forms of $\Sigma$ and $\partial M$; i.e. $A_{\partial M}(V, W) = -g(\tilde{\nabla}_V \eta, W)$ where $\eta$ is outward pointing unit normal to $\partial M$, and $A_{\Sigma}(V, W) = -g(\tilde{\nabla}_V N, W)$.

\begin{defi}
Let $\Sigma(R) = \Sigma\cap \{|x|\leq R\}$. The Morse index of $\Sigma(R)$ is defined as the maximal dimension of a linear subspace $V$ of class $C^{\infty}$ functions $u$ vanishing on $\{|x|=R\}$ such that $Q(u, u)<0$, for all $u \in V\setminus \{0\}$. The Morse index of $\Sigma$ is defined as the limit of the indices of the domains $\Sigma(R)$ as $R$ tends to infinity.
\end{defi}

Integration by parts shows that the Morse index of $\Sigma(R)$ coincides with the number of negative eigenvalues (with repetitions) of the Jacobi operator
\begin{equation}\label{Jacobi-operator}
L_{\Sigma}u = \Delta_{\Sigma} u + (Ric_g(N,N)+|A_{\Sigma}|^2)u,
\end{equation}
with boundary conditions 
\begin{equation}\label{boundary-conditions}
u =0 \text{ on } \{|x|=R\}, \text{ and } \frac{\partial u}{\partial \nu} + A_{\partial M}(N,N) u =0 \text{ on } \partial \Sigma.  
\end{equation}
More precisely, if $\lambda_1 < \lambda_2 \leq \ldots \leq \lambda_k \leq  \ldots \rightarrow \infty$ is the list of eigenvalues of $L_{\Sigma}$ with boundary conditions (\ref{boundary-conditions}), with repetitions, then the number of such negative eigenvalues coincides with the index of $\Sigma(R)$.

Next, we specialize to the case of $\Sigma = \Sigma_0$, the plane through the origin in the Riemannian Schwarzschild manifold. From this point on, we will rewrite the conformal factor of the metric $g$ as 
\begin{equation}
e^{2\varphi(r)} = \bigg(1+\frac{m}{2r}\bigg)^4.
\end{equation}
We claim that the eigenvalue problem above becomes
\begin{equation}\label{eigenv-problem-1}
\Delta^{\delta}_{\Sigma_0} u + \frac{m}{r^3}e^{-\varphi(r)}u + \lambda e^{2\varphi(r)}u = 0, \text{ on } \Sigma_0(R),
\end{equation}
with boundary conditions
\begin{equation}\label{eigenvalue-problem-2}
u =0 \text{ on } \Sigma_0\cap \{|x|=R\}, \text{ and } \frac{\partial u}{\partial \nu} =0 \text{ on } \partial \Sigma_0,
\end{equation}
where $\Delta^{\delta}_{\Sigma_0} = Tr_{(\Sigma_0, \delta)} Hess_{\delta} u$ is the Euclidean Laplace operator over $\Sigma_0$. The proof of this claim is a straightforward computation using the formulas for the Laplace operator and Ricci curvature under a conformal change, see 1.158 in \cite{Bes}, and the facts that $\partial M$ and $\Sigma_0$ are totally geodesic surfaces.

\subsection{Separation of variables and reduction to a Ricatti equation}\label{sect-sep-var}

Let us start by writing the system obtained in the previous section in polar coordinates $u = u(r, \theta)$ of $\Sigma$; i.e. $(x_1, x_2, 0) = (r\cos \theta, r\sin \theta, 0)$. Using the expression of the Laplace operator in polar coordinates, we obtain
\begin{equation}\label{eq-pol-1}
u_{rr} + \frac{u_r}{r}  + \frac{u_{\theta\theta}}{r^2}  + \frac{m}{r^3}\bigg(1+\frac{m}{2r}\bigg)^{-2} u + \lambda \bigg(1+\frac{m}{2r}\bigg)^{4}u = 0, \text{ on } \Sigma_R,
\end{equation}
with boundary conditions    
\begin{eqnarray}
\label{eq-pol-2} u_r=0, \text{ on } r=m/2, \text{ and }\\
\label{eq-pol-3} u =0, \text{ on } r=R.
\end{eqnarray}

In order to analyze the system determined by equations (\ref{eq-pol-1}), (\ref{eq-pol-2}), and (\ref{eq-pol-3}), we consider the following separation of variables
\begin{equation*}
u(r, \theta) = \sum_{k\in \Z} u_k(r)\cdot e^{ik\theta}. 
\end{equation*}
Therefore, a solution to the above system must satisfy, for each $k \in \Z$,
\begin{equation}\label{eq-pol-k1}
u_{k}^{\prime\prime} + \frac{u_k^{\prime}}{r}  - \frac{k^2}{r^2} u_k + \frac{m}{r^3}\bigg(1+\frac{m}{2r}\bigg)^{-2} u_k + \lambda \bigg(1+\frac{m}{2r}\bigg)^{4}u_k = 0,
\end{equation}
with boundary conditions    
\begin{equation}
\label{eq-pol-k2} u_k^{\prime}(m/2)=0, \text{ and } u_k(R) =0.
\end{equation}

From now on, let us use 
\begin{equation}
v(r) = \sqrt{r} \cdot u_k(r).
\end{equation}
Observe that (\ref{eq-pol-k1}) and (\ref{eq-pol-k2}) imply that $v(r)$ solves
\begin{equation}\label{eq-pol-v}
v^{\prime\prime}(r) + v(r)\cdot \bigg( \frac{1}{4r^2}  - \frac{k^2}{r^2} + \frac{m}{r^3}\bigg(1+\frac{m}{2r}\bigg)^{-2} + \lambda \bigg(1+\frac{m}{2r}\bigg)^{4}\bigg) = 0,
\end{equation}
with boundary conditions
\begin{equation}
\label{eq-pol-v2} v^{\prime}\bigg(\frac{m}{2}\bigg)=m^{-1}\cdot v\bigg(\frac{m}{2}\bigg), \text{ and } v(R) =0.
\end{equation}
We claim that whenever $u_k$ is not identically zero, then the associated function $v$ has isolated zeroes only. In order to prove this claim, it suffices to verify that if $v(r)=0$, then $v^{\prime}(r)\neq 0$ by uniqueness of solutions.

Away from the zeroes of $v$, we define 
\begin{equation}
\gamma(r) = \frac{v^{\prime}(r)}{v(r)}.
\end{equation}
A straightforward computations yields the following properties of $\gamma$:
\begin{equation}\label{eq-pol-gamma1}
\gamma^{\prime}(r) + \gamma(r)^2 = -\frac{1}{4r^2}  + \frac{k^2}{r^2} - \frac{m}{r^3}\bigg(1+\frac{m}{2r}\bigg)^{-2} - \lambda \bigg(1+\frac{m}{2r}\bigg)^{4},
\end{equation}
with boundary conditions
\begin{equation}
\label{eq-pol-gamma2} \gamma\bigg(\frac{m}{2}\bigg)=m^{-1}, \text{ and } \lim_{r\rightarrow R^-} \gamma(r) = -\infty.
\end{equation}
The negative sign in the above limit can be determined from equation (\ref{eq-pol-gamma1}). Indeed, since the right-hand-side is bounded away from $r=0$, if $\gamma^2$ grows to infinity the derivative $\gamma^{\prime}$ must decrease to negative infinity. This implies, in particular, that whenever $v(r_0)=0$, we have that $\gamma(r)$ tends to $-\infty$ as $r\rightarrow r_0^{-}$, and to $+\infty$ as $r\rightarrow r_0^{+}$.


\subsection{Analysis of the Ricatti equations}\label{sect-ricatti}

In this section we study the Ricatti equation (\ref{eq-pol-gamma1}) with boundary condition (\ref{eq-pol-gamma2}). The solution to that system is defined on the closed interval $[m/2, R]$, except at finitely many points, $r=R$ included. Since we are interested in solutions with negative $\lambda$, we start with the following fact.

\begin{prop}
If $\lambda\leq 0$ and $k\neq 0$, the system composed by equations  (\ref{eq-pol-gamma1}) and (\ref{eq-pol-gamma2}) admits no solutions.
\end{prop}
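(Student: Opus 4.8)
The plan is to show that, under the hypotheses $\lambda\le 0$ and $k\ne 0$, the solution $\gamma$ emanating from the horizon with $\gamma(m/2)=m^{-1}>0$ stays strictly positive on the whole interval $[m/2,R]$, so that it can never satisfy the second condition $\lim_{r\to R^-}\gamma(r)=-\infty$ in (\ref{eq-pol-gamma2}). Since the initial value $\gamma(m/2)=m^{-1}$ determines the solution uniquely by ODE uniqueness, and any genuine solution of the system would have to meet \emph{both} boundary conditions, this rules out existence. The heart of the matter is therefore a pointwise estimate showing that the right-hand side of (\ref{eq-pol-gamma1}) is strictly positive.

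Concretely, I would rewrite (\ref{eq-pol-gamma1}) as
\[
\gamma'(r)=-\gamma(r)^2+\frac{k^2-\tfrac14}{r^2}-\frac{m}{r^3}\Big(1+\frac{m}{2r}\Big)^{-2}-\lambda\Big(1+\frac{m}{2r}\Big)^4 ,
\]
and note that for $\lambda\le 0$ the last term is nonnegative while for $k\ne 0$ one has $k^2-\tfrac14\ge\tfrac34$. The only term with the wrong sign is the third one, and the key claim is that it is dominated. Substituting $s=m/r$, which ranges over $(0,2]$ as $r$ ranges over $[m/2,\infty)$, the competing quantity becomes $\tfrac{m}{r}(1+\tfrac{m}{2r})^{-2}=\tfrac{4s}{(2+s)^2}$, a function that is increasing on $(0,2]$ and attains its maximum $\tfrac12$ at the horizon $s=2$. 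Hence $\tfrac{m}{r^3}(1+\tfrac{m}{2r})^{-2}\le\tfrac{1}{2r^2}$ throughout, and the sum of the last three terms above is bounded below by $(\tfrac34-\tfrac12)r^{-2}=\tfrac{1}{4r^2}>0$.

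With this estimate the conclusion follows from a barrier argument on the Riccati flow. Since $\gamma(m/2)=m^{-1}>0$, suppose toward a contradiction that $\gamma$ reaches the value $0$ for the first time at some $r_0\in(m/2,R)$; then necessarily $\gamma'(r_0)\le 0$. But evaluating the displayed equation where $\gamma=0$ gives $\gamma'(r_0)\ge\tfrac{1}{4r_0^2}>0$, a contradiction, so $\gamma$ cannot cross zero from above. Moreover the right-hand side of (\ref{eq-pol-gamma1}) is bounded above on $[m/2,\infty)$, so the term $-\gamma^2$ prevents $\gamma$ from escaping to $+\infty$; thus $\gamma$ remains positive and finite and never tends to $-\infty$. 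Equivalently, in terms of $v=\sqrt r\,u_k$ one has $v''=-P(r)v$ with $P<0$, so once $v(m/2)>0$ and $v'(m/2)=m^{-1}v(m/2)>0$ the function $v$ is convex and increasing, hence positive for all $r$, and the Dirichlet condition $v(R)=0$ is never met.

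I expect the only real obstacle to be the positivity of the right-hand side of (\ref{eq-pol-gamma1}); the remaining sign/barrier reasoning is soft. The delicate feature is that the estimate is sharp exactly at the horizon $r=m/2$, where $\tfrac{m}{r}(1+\tfrac{m}{2r})^{-2}$ attains its maximal value $\tfrac12$, so the surviving margin $\tfrac34-\tfrac12=\tfrac14$ depends essentially on $k\ne 0$ through $k^2-\tfrac14\ge\tfrac34$. For $k=0$ this margin disappears, which is consistent with the Morse index being carried entirely by the rotationally symmetric mode.
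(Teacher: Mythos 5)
Your proof is correct, and it rests on exactly the same key estimate as the paper's, namely $\frac{m}{r}\big(1+\frac{m}{2r}\big)^{-2}\leq \frac{1}{2}$ (which you, unlike the paper, actually verify, via the substitution $s=m/r$), so that for $k^2\geq 1$ and $\lambda\leq 0$ the right-hand side of (\ref{eq-pol-gamma1}) is bounded below by $\frac{1}{4r^2}>0$. Where you genuinely diverge is in how this positivity is exploited. The paper writes down an explicit solution $\psi$ of the comparison equation $\psi'+\psi^2=\big(k^2-\frac{3}{4}\big)r^{-2}$ with $\psi(m/2)=m^{-1}$, observes that $\psi$ is singularity-free, and concludes $\gamma\geq\psi$ by Riccati comparison. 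You instead note that positivity of the potential makes $0$ a strict lower barrier: at a first zero of $\gamma$ one would need $\gamma'(r_0)\leq 0$, while the equation gives $\gamma'(r_0)\geq\frac{1}{4r_0^2}>0$; combined with your remark that the $-\gamma^2$ term forbids forward blow-up to $+\infty$, this keeps $\gamma$ positive and finite, so the condition $\lim_{r\to R^-}\gamma(r)=-\infty$ is unattainable. Your equivalent $v$-formulation ($v''=-Pv$ with $-P\geq\frac{1}{4r^2}$, $v(m/2)>0$, $v'(m/2)=m^{-1}v(m/2)>0$, hence $v$ convex, increasing, and positive) is cleaner still, since it bypasses the singularity bookkeeping and the blow-up discussion altogether. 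What the paper's route buys is a quantitative lower bound $\gamma\geq\psi>0$, but that extra information is never used elsewhere; your softer argument fully suffices for the proposition and is more elementary. Two minor points to make explicit if you write this up: it is because $k\in\Z$ that $k\neq 0$ yields $k^2\geq 1$ and hence $k^2-\frac{1}{4}\geq\frac{3}{4}$; and in the $v$-picture you may indeed assume $v(m/2)>0$, since $v(m/2)=0$ together with the Robin condition $v'(m/2)=m^{-1}v(m/2)$ would force $v\equiv 0$ by uniqueness.
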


\begin{proof}
We begin by estimating the right-hand-side of equation (\ref{eq-pol-gamma1}) as
\begin{equation*}
 -\frac{1}{4r^2}  + \frac{k^2}{r^2} - \frac{m}{r^3}\bigg(1+\frac{m}{2r}\bigg)^{-2} - \lambda \bigg(1+\frac{m}{2r}\bigg)^{4} \geq \frac{1}{r^2}\bigg( k^2-\frac{3}{4}\bigg).
\end{equation*}
This follows from the easily verifiable inequality 
\begin{equation*}
\frac{m}{r}\bigg(1+\frac{m}{2r}\bigg)^{-2}\leq \frac{1}{2}.
\end{equation*}

Next, we observe that the function
\begin{equation}
\psi(r) = \frac{1}{2r}\bigg( 1-\sqrt{4k^2-2}\cdot \bigg(\frac{2}{1 + (2r/m)^{\sqrt{4k^2-2}}} -1 \bigg)\bigg),
\end{equation}
considered with domain of definition $r\geq m/2$, solves
\begin{equation*}
\psi^{\prime} + \psi^2 = \frac{1}{r^2}\bigg( k^2-\frac{3}{4}\bigg),
\end{equation*}
with boundary condition $\psi(m/2) = m^{-1}$.

Therefore, it follows from the above estimate that
\begin{equation*}
\gamma^{\prime}+ \gamma^2 \geq \psi^{\prime} + \psi^2 \text{ and }  \gamma (m/2) = \psi(m/2), 
\end{equation*}
which implies that $\gamma(r) \geq \psi(r)$, for all $r$, since $\psi$ has no singularities. In particular, any function $\gamma$ satisfying equation (\ref{eq-pol-gamma1}), with $\lambda\leq 0$ and $k\neq 0$, and $\gamma(m/2) = m^{-1}$, does not have any singularities and cannot satisfy the second boundary condition expressed in (\ref{eq-pol-gamma2}).
\end{proof}

The above result has the following immediate consequences.

\begin{cor}\label{cor-radial}
\begin{enumerate}
\item[(a)] Solutions to the system composed by the equation
\begin{equation*}
L_{\Sigma} u + \lambda u =0, \text {on } \Sigma_R,
\end{equation*}
and boundary conditions
\begin{eqnarray*}
u=0, \text{ on } \Sigma \cap \{|x|=R\}, \text{ and }\\
 \frac{\partial u}{\partial \nu} = 0, \text{ on } \Sigma \cap \{|x|=m/2\},
\end{eqnarray*}
with $\lambda \leq 0$ are radial functions $u = u(r)$.

\item[(b)] Non-positive eigenvalues associated with the eigenvalue problem posed in part (a) have multiplicity one.
\end{enumerate}
\end{cor}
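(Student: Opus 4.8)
The plan is to deduce both statements directly from the preceding Proposition together with the separation of variables set up in Section \ref{sect-sep-var}.

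For part (a), I would start with a solution $u$ of the eigenvalue problem with $\lambda \leq 0$ and expand it as a Fourier series $u(r,\theta) = \sum_{k \in \Z} u_k(r) e^{ik\theta}$. By orthogonality of the characters $e^{ik\theta}$, each coefficient $u_k$ must solve the reduced equation (\ref{eq-pol-k1}) with boundary conditions (\ref{eq-pol-k2}). I then fix $k \neq 0$ and claim that $u_k \equiv 0$. Indeed, if $u_k$ were nontrivial, then $v = \sqrt{r}\, u_k$ would be a nontrivial solution of (\ref{eq-pol-v}) satisfying (\ref{eq-pol-v2}); moreover $v(m/2)\neq 0$, since otherwise the Robin condition in (\ref{eq-pol-v2}) would force $v'(m/2)=0$ as well, and hence $v\equiv 0$ by uniqueness for the linear ODE. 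Consequently $\gamma = v'/v$ would be well defined near $r=m/2$ and would solve the Ricatti system (\ref{eq-pol-gamma1})--(\ref{eq-pol-gamma2}), which is impossible by the Proposition. Thus $u_k \equiv 0$ for every $k\neq 0$, and $u = u_0(r)$ is radial.

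For part (b), the conclusion of part (a) shows that the entire eigenspace of a non-positive eigenvalue consists of radial functions, so it suffices to prove that the space of radial eigenfunctions is at most one-dimensional. For a radial eigenfunction the associated $v = \sqrt{r}\, u_0$ solves (\ref{eq-pol-v}) with $k=0$, an equation of the form $v'' + q(r)\, v = 0$ with no first-order term, subject to the Robin condition $v'(m/2)=m^{-1}v(m/2)$ and to $v(R)=0$. The absence of a first-order term means that the Wronskian $W(v_1,v_2) = v_1 v_2' - v_1' v_2$ of any two solutions is constant in $r$. Given two radial eigenfunctions with associated $v_1, v_2$, the common Robin condition at $r=m/2$ forces $W(v_1,v_2)(m/2)=0$, whence $W\equiv 0$ and $v_1, v_2$ are linearly dependent. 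Therefore the eigenspace is one-dimensional, which is exactly the asserted multiplicity one.

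The computations involved are routine, and the genuine content resides in the Proposition already established; the only point in the corollary that requires care is the reduction in part (a), namely verifying that a nontrivial non-radial mode really produces a bona fide solution of the Ricatti system to which the Proposition applies. This amounts to ruling out $v(m/2)=0$ and confirming that the blow-up $\gamma\to -\infty$ forced by $v(R)=0$ contradicts the lower bound $\gamma \geq \psi$ obtained in the proof of the Proposition. Once this bookkeeping is in place, both parts follow at once.
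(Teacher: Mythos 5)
Your proposal is correct and follows essentially the same route as the paper: part (a) is exactly the intended ``immediate consequence'' of the nonexistence Proposition via the Fourier decomposition already set up in Section \ref{sect-sep-var} (your check that $v(m/2)\neq 0$, so that $\gamma=v'/v$ genuinely satisfies the Ricatti boundary data, is a worthwhile detail the paper leaves implicit), and part (b) matches the paper's appeal to uniqueness for the radial ODE, with your constant-Wronskian argument being only a cosmetic variant of that uniqueness statement.
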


Part (b) follows from (a) and uniqueness of solutions to ODEs with given boundary data. Next we introduce a family of auxiliary functions that will help us on the analysis of radial eigenfunctions with negative eigenvalues.

\begin{lemma}\label{fctn-psi_c}
For each parameter $c$, the function 
\begin{equation}
\psi_c(r) = \frac{1}{2r}+\frac{4rm(4\log r + c + 8)+16r^2 - 4m^2}{r(4r^2-m^2)(4\log r +c + 8)-8r(2r+m)^2}
\end{equation}
solves equation (\ref{eq-pol-gamma1}) for $\lambda=0$ and $k=0$. The constant $c$ determines $\psi_c(m/2)$, and for $\overline{c} = -8 -4\log (m/2)$ we have $\psi_{\overline{c}}(m/2)= m^{-1}$.
\end{lemma}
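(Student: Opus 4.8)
The plan is to exploit the correspondence between the Ricatti equation (\ref{eq-pol-gamma1}) and the linear second-order equation (\ref{eq-pol-v}): substituting $\gamma = v'/v$ turns the identity $\gamma' + \gamma^2 = v''/v$ into precisely the right-hand side of (\ref{eq-pol-gamma1}). Since $v = \sqrt{r}\,u_0$ with $u_0$ a radial mode, one has
\begin{equation*}
\gamma(r) = \frac{v'(r)}{v(r)} = \frac{1}{2r} + \frac{u_0'(r)}{u_0(r)},
\end{equation*}
which already accounts for the leading term $\tfrac{1}{2r}$ in the stated formula for $\psi_c$. Thus it suffices to write down the general solution $u_0$ of (\ref{eq-pol-k1}) with $k = 0$ and $\lambda = 0$ and read off its logarithmic derivative; because $u_0$ solves the linear equation, the resulting $\psi_c$ automatically satisfies (\ref{eq-pol-gamma1}), so no separate substitution into the Ricatti equation is needed.

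First I would put (\ref{eq-pol-k1}) with $k = \lambda = 0$ into Sturm--Liouville form, using $(1 + m/2r)^{-2} = 4r^2/(2r+m)^2$:
\begin{equation*}
(r\,u_0')' + \frac{4m}{(2r+m)^2}\,u_0 = 0.
\end{equation*}
An explicit solution can be found by inspection,
\begin{equation*}
u_1(r) = \frac{2r-m}{2r+m},
\end{equation*}
and a second, independent solution follows from reduction of order, $u_2 = u_1\int \frac{dr}{r\,u_1^2}$. The integrand simplifies by partial fractions to $\tfrac{1}{r} + \tfrac{8m}{(2r-m)^2}$, giving
\begin{equation*}
u_2(r) = \frac{(2r-m)\log r - 4m}{2r+m}.
\end{equation*}

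Writing the general radial solution as $u_0 = \alpha u_1 + \beta u_2 = \frac{(2r-m)(\alpha + \beta\log r) - 4\beta m}{2r+m}$, I would compute $\tfrac{1}{2r} + u_0'/u_0$ and compare it with the claimed closed form. Its logarithmic derivative is a quotient whose numerator and denominator are affine in $\alpha + \beta\log r$; setting $\beta = 4$ and $\alpha = c$ and writing $\alpha + \beta\log r = P - 8$ with $P := 4\log r + c + 8$, a short rearrangement shows that the two expressions coincide exactly. This proves the first assertion of the lemma.

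It remains to pin down $\overline{c}$. Since $\gamma(m/2) = \tfrac{1}{m} + u_0'(m/2)/u_0(m/2)$ and $\tfrac{1}{2r}$ equals $\tfrac{1}{m}$ at $r = m/2$, the normalization $\psi_c(m/2) = m^{-1}$ in (\ref{eq-pol-gamma2}) is equivalent to $u_0'(m/2) = 0$. Using $2r - m = 0$ at the horizon, one finds $u_1'(m/2) = 1/m$ and $u_2'(m/2) = (\log(m/2) + 2)/m$, whence
\begin{equation*}
u_0'(m/2) = \frac{c + 4\log(m/2) + 8}{m},
\end{equation*}
which vanishes precisely for $\overline{c} = -8 - 4\log(m/2)$. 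The conceptual steps here are all routine; the genuine work, and the only real place for error, is the bookkeeping in the third paragraph --- carrying $\log r$ correctly through the reduction-of-order integral and the logarithmic-derivative computation, and matching the redundant additive constant so the closed form comes out verbatim. A convenient check is that at $c = \overline{c}$ one has $P = 4\log(2r/m)$, which vanishes on the horizon, in agreement with the Neumann condition there.
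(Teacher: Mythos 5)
Your proposal is correct --- I verified the key computations: the Sturm--Liouville form $(r u_0')' + \tfrac{4m}{(2r+m)^2}u_0 = 0$, that $u_1 = \tfrac{2r-m}{2r+m}$ solves it, the partial-fraction identity $\tfrac{(2r+m)^2}{r(2r-m)^2} = \tfrac{1}{r} + \tfrac{8m}{(2r-m)^2}$, the second solution $u_2 = \tfrac{(2r-m)\log r - 4m}{2r+m}$, and, with $\alpha = c$, $\beta = 4$, the identities $u_0 = \tfrac{(2r-m)P - 8(2r+m)}{2r+m}$ and $u_0' = \tfrac{4(mrP + 4r^2 - m^2)}{r(2r+m)^2}$, which reproduce the stated $\psi_c$ verbatim and give $u_0'(m/2) = (c + 4\log(m/2) + 8)/m$. (One small point worth making explicit: the equivalence of $\psi_c(m/2) = m^{-1}$ with $u_0'(m/2)=0$, and the claim that $c$ determines $\psi_c(m/2)$, both use $u_0(m/2) = -8 \neq 0$.) Your route is genuinely different from the paper's: the paper omits the argument as ``a straightforward computation,'' i.e.\ a direct substitution of $\psi_c$ into the Riccati equation (\ref{eq-pol-gamma1}), whereas you \emph{derive} $\psi_c$ by solving the underlying linear equation (\ref{eq-pol-k1}) with $k=\lambda=0$ via a known solution and reduction of order, then passing to the logarithmic derivative of $v=\sqrt{r}\,u_0$. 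The direct check is shorter but opaque; your derivation explains where $\psi_c$ comes from, shows that (up to the single extra solution coming from $\beta = 0$) the family $\{\psi_c\}$ exhausts all solutions of the $\lambda=k=0$ Riccati equation --- which is implicitly used in the barrier argument of Corollary \ref{cor-1sing} --- and makes transparent the link to the proof of Theorem \ref{thm-A}: at $c = \overline{c}$ one has $P = 4\log(2r/m)$, so $u_0 = -8\bigl(1 - \tfrac{2r-m}{2r+m}\log\sqrt{2r/m}\bigr)$ and $\sqrt{r}\,u_0$ is proportional to the function $v_0$ appearing there.
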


The proof of this lemma is a straightforward computation that we omit here. Next, we list some further properties of the functions $\psi_c(r)$.

\begin{prop}\label{prop-R_c}
\begin{itemize}
\item[(a)] The function $\psi_c$ has a single singularity at the point $r = R_c$ such that
\begin{equation*}
(2R_c-m)(4\log R_c + 8 + c) = 8(2R_c+m).
\end{equation*}

\item[(b)] The value $R_c$ of the singularity is a strictly decreasing function of the parameter $c$ with $\lim_{c\rightarrow -\infty} R_c = +\infty$.


\end{itemize}
\end{prop}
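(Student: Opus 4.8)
The plan is to locate the singularities of $\psi_c$ by analyzing its denominator, reduce part (a) to showing that a transcendental equation has a unique root, and then treat part (b) by inverting the relation between $c$ and that root.

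First I would record that in the region $r\geq m/2$ the only possible singularities of $\psi_c$ come from the vanishing of the denominator of its second summand, since the term $1/(2r)$ is smooth there. Writing $L = 4\log r + c + 8$ for brevity and using $4r^2 - m^2 = (2r-m)(2r+m)$, the denominator factors as
\begin{equation*}
r(4r^2-m^2)L - 8r(2r+m)^2 = r(2r+m)\big[(2r-m)L - 8(2r+m)\big].
\end{equation*}
Since $r>0$ and $2r+m>0$ throughout, the singularities occur exactly at the zeros of
\begin{equation*}
F(r) = (2r-m)(4\log r + c + 8) - 8(2r+m),
\end{equation*}
and $F(R_c)=0$ is precisely the equation in the statement of (a). A short computation of the numerator at a root of $F$ — substituting $4\log r + c + 8 = 8(2r+m)/(2r-m)$ — shows it equals $4(2r+m)^3/(2r-m)>0$, so each such point is a genuine pole and not a removable singularity.

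It remains to prove that $F$ has a unique zero on $(m/2,\infty)$, which is the crux of part (a). Existence is immediate from the intermediate value theorem, since $F(m/2) = -16m < 0$ while $F(r)\to+\infty$ as $r\to\infty$ because the term $2r\cdot 4\log r$ dominates. For uniqueness — the main obstacle — I would show that $F$ is strictly increasing at every one of its zeros, so it can cross the level zero only once. The key trick is to use the defining relation itself to eliminate the logarithm: at a root one has $4\log r + c + 8 = 8(2r+m)/(2r-m)$, and feeding this into
\begin{equation*}
F'(r) = 2(4\log r + c + 8) + \frac{4(2r-m)}{r} - 16
\end{equation*}
collapses the expression to $F'(r) = 4(2r+m)^2/\big(r(2r-m)\big) > 0$. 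This pins down the single singularity $R_c$ and establishes (a).

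For part (b), I would invert the relation by regarding $c$ as a function of the root. Solving $F(R)=0$ gives
\begin{equation*}
c(R) = \frac{8(2R+m)}{2R-m} - 4\log R - 8,
\end{equation*}
and differentiating yields $c'(R) = -32m/(2R-m)^2 - 4/R < 0$, so $c$ is strictly decreasing on $(m/2,\infty)$. As $R\to (m/2)^+$ the first term blows up and $c(R)\to+\infty$, while as $R\to\infty$ the first term tends to $8$ and $c(R)\to-\infty$. Hence $c\colon (m/2,\infty)\to\R$ is a strictly decreasing continuous bijection, and its inverse $c\mapsto R_c$ is likewise strictly decreasing with $R_c\to+\infty$ as $c\to-\infty$, as claimed. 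Equivalently, implicit differentiation of $F(R_c)=0$ using $\partial_c F = 2R_c - m > 0$ together with the formula for $F'(R_c)$ above gives $dR_c/dc = -R_c(2R_c-m)^2/\big(4(2R_c+m)^2\big)<0$.
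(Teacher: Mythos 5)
Your proof is correct and follows essentially the same route as the paper: part (a) by locating the zeros of the denominator of $\psi_c$, and part (b) by inverting the defining relation to get $c$ as a strictly decreasing function of $R$ with the right limits (the paper's displayed inverse, $\frac{2R+m}{2R-m}-\frac{1}{2}(\log R + 2)$, is exactly your $c(R)/8$, an immaterial rescaling). You also supply the details the paper leaves to the reader --- the factorization of the denominator, the check that the numerator equals $4(2r+m)^3/(2r-m)>0$ at a root so the singularity is not removable, and the uniqueness argument via $F'(r)=4(2r+m)^2/(r(2r-m))>0$ at every zero --- and all of these computations check out.
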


Observe that part (a) follows from the explicit expression of $\psi_c$ given in Lemma \ref{fctn-psi_c}. Part (b) is a consequence of the fact that
\begin{equation*}
c(R):= \frac{2R+m}{2R-m} - \frac{1}{2}(\log R + 2)
\end{equation*}
is a strictly decreasing function of $R$, since this is the inverse of $R_c = R(c)$.

\begin{cor}\label{cor-1sing}
The solution $\gamma_{\lambda} = \gamma_{\lambda}(r)$ of the Ricatti equation
\begin{equation}
\gamma^{\prime} + \gamma^2 = -\frac{1}{4r^2}  - \frac{m}{r^3}\bigg(1+\frac{m}{2r}\bigg)^{-2} - \lambda \bigg(1+\frac{m}{2r}\bigg)^{4},
\end{equation}
with $\gamma_{\lambda}(m/2) = m^{-1}$, and $\lambda<0$ has at most one singularity.
\end{cor}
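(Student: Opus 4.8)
The plan is to recast the statement as a zero-counting problem for the underlying linear equation and then apply a Sturm comparison. Recall that $\gamma_{\lambda} = v'/v$, where $v$ solves the $k=0$ case of equation (\ref{eq-pol-v}), that is $v'' + q_{\lambda}\, v = 0$ with potential $q_{\lambda}(r) = \frac{1}{4r^2} + \frac{m}{r^3}(1+\frac{m}{2r})^{-2} + \lambda (1+\frac{m}{2r})^{4}$, subject to the Robin-type initial data $v'(m/2) = m^{-1} v(m/2)$ coming from $\gamma_{\lambda}(m/2)=m^{-1}$. As observed in the paragraph following (\ref{eq-pol-gamma2}), the singularities of $\gamma_{\lambda}$ are exactly the zeros of $v$. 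Hence it suffices to prove that this $v$ has at most one zero on $[m/2,\infty)$.

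First I would record the benchmark at $\lambda=0$. By Lemma \ref{fctn-psi_c}, the function $\psi_{\overline{c}}$ solves the Riccati equation with $\lambda=0$ and $\psi_{\overline{c}}(m/2)=m^{-1}$, so $\psi_{\overline{c}} = w'/w$ for a solution $w$ of $w'' + q_0\, w = 0$ carrying the identical initial data $w'(m/2)=m^{-1}w(m/2)$, where $q_0 = q_{\lambda} - \lambda(1+\frac{m}{2r})^{4}$. By Proposition \ref{prop-R_c}(a), $\psi_{\overline{c}}$ has a single singularity, at $R_{\overline{c}}$; equivalently, $w$ has exactly one zero on $[m/2,\infty)$.

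Next I would compare the two equations. Since $\lambda<0$, we have $q_{\lambda} = q_0 + \lambda(1+\frac{m}{2r})^{4} < q_0$ pointwise, so $v$ solves an equation with strictly smaller potential than $w$, while sharing the same initial Riccati value at $r=m/2$. By the Sturm comparison theorem, most transparently through the monotonicity of the Prüfer angle $\theta$ defined by $v=\rho\sin\theta$, $v'=\rho\cos\theta$, whose evolution $\theta' = \cos^2\theta + q\sin^2\theta$ is nondecreasing in the potential $q$, the angle for $v$ stays below that for $w$ once they agree at $m/2$; note both start in $(0,\pi/2)$ because $\gamma_{\lambda}(m/2)=m^{-1}$ is finite and positive, so $v(m/2)\neq 0$. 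Since $\theta'=1>0$ whenever $\theta\in\pi\Z$, the angle increases strictly through each multiple of $\pi$, so the zero count is a nondecreasing function of $\theta$. Hence $v$ reaches each multiple of $\pi$ no sooner than $w$ and has no more zeros than $w$ on $[m/2,\infty)$. As $w$ has exactly one zero, $v$ has at most one, and therefore $\gamma_{\lambda}$ has at most one singularity.

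The main obstacle is that the naive Riccati comparison, which would directly assert $\gamma_{\lambda}\geq \psi_{\overline{c}}$ as in the proof of the preceding Proposition, is valid only up to the first singularity of $\psi_{\overline{c}}$ and reverses across it; one therefore cannot read off the singularity count of $\gamma_{\lambda}$ from that inequality alone. Passing to the linear solutions and counting their zeros via the Prüfer angle circumvents this, since the count is insensitive to the locations of the blow-ups of the Riccati solutions. The only remaining points are routine: that the shared Robin data translate into equal initial Prüfer angles, and that the phrase \emph{single singularity} in Proposition \ref{prop-R_c}(a) genuinely means $w$ has exactly one zero on the whole half-line $[m/2,\infty)$, which the explicit formula for $\psi_{\overline{c}}$ provides.
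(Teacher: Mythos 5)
Your proof is correct, and it takes a genuinely different route from the paper's. The paper works entirely at the Riccati level: it first compares $\gamma_\lambda$ with $\psi_{\overline c}$ (same value at $r=m/2$, strictly larger right-hand side) to get $\gamma_\lambda \ge \psi_{\overline c}$ and hence no singularity on $[m/2, R_{\overline c})$; then, to rule out a singularity exactly at $R_{\overline c}$, it argues by contradiction, using the solutions $\psi_c$ through an arbitrary point of the graph of $\gamma_\lambda$ as barriers together with the monotonicity of $c \mapsto R_c$ from Proposition \ref{prop-R_c}(b); finally, past a hypothetical first singularity $R_\lambda > R_{\overline c}$ it compares once more with $\psi_{\overline c}$, which by then has no further blow-ups, to exclude a second singularity. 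Your Pr\"ufer-angle argument collapses these three steps into a single Sturm comparison at the level of the linear equation: $q_\lambda < q_0$ with identical Robin data forces $\theta_v \le \theta_w$, and $w$ has exactly one zero by Proposition \ref{prop-R_c}(a), together with the fact that the angle crosses multiples of $\pi$ only upward. This is cleaner and more robust---it needs only part (a) of Proposition \ref{prop-R_c} (part (b) and the general-$c$ family of barriers become superfluous), and, as you note, it is insensitive to where the Riccati solutions blow up, which is precisely the delicacy the paper's proof must circumvent with its contradiction argument at $R_{\overline c}$ and its separate analysis beyond $R_\lambda$. What the paper's route buys in exchange is self-containedness: it uses nothing beyond first-order ODE comparison and the explicit formula for $\psi_c$, never invoking oscillation theory, and it yields the explicit pointwise bound $\gamma_\lambda \ge \psi_{\overline c}$ on $[m/2, R_{\overline c}]$ along the way.
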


\begin{proof}
Following the notation of Lemma \ref{fctn-psi_c} and Proposition \ref{prop-R_c}, we observe that $\gamma_{\lambda}$ coincides with the function $\psi_{\overline{c}}$ at $r=m/2$, and 
\begin{equation*}
\gamma_{\lambda}^{\prime}+ \gamma_{\lambda}^2 > \psi_{\overline{c}}^{\prime} + \psi_{\overline{c}}^2.
\end{equation*}
This implies that $\gamma_{\lambda}(r) \geq \psi_{\overline{c}}(r)$, for every $r \in [m/2, R_{\overline{c}}]$. In particular, the function $\gamma_{\lambda}$ does not have any singularities on the interval $[m/2, R_{\overline{c}})$, since 
\begin{equation}
\lim_{r\rightarrow r_0^-} \gamma_{\lambda}(r) = -\infty \text{ and } \lim_{r\rightarrow r_0^+} \gamma_{\lambda}(r) = +\infty
\end{equation}
whenever $r_0$ is a singularity of $\gamma_{\lambda}$, as observed at the end of Section \ref{sect-sep-var}.

We claim that $R_{\overline{c}}$ is not a singularity of $\gamma_{\lambda}$ as well. Suppose, by contradiction, that this is not the case. Pick any point $p = (r_p, \gamma_{\lambda}(r_p))$ in the graph of $\gamma_{\lambda}$ different from $(m/2, m^{-1})$, and let $f = f(r)$ be the solution to (\ref{eq-pol-gamma1}) for $\lambda=0$ and $k=0$ through $p$, i.e. it solves the same differential equation as the $\psi_c$. Therefore, $f = \psi_{c}$ for some $c$, and we observe further that $c < \overline{c}$. This can be seen as an application of part (b) of Proposition \ref{prop-R_c}. Finally, we use $f$ as a barrier to obtain a contradiction. Indeed, the reasoning applied above implies that $\gamma_{\lambda}(r)\geq f(r) = \psi_c(r)$, for $r$ on $[r_p, R_c]$. Applying part (b) of the proposition again we see that $\gamma_{\lambda}$ does not develop singularity before $R_{c} > R_{\overline{c}}$. This contradicts the assumption that $R_{\overline{c}}$ is a singularity of $\gamma_{\lambda}$.

In order to conclude the proof, we observe that if $\gamma_{\lambda}$ has a singularity at $R(\lambda)$, then the facts that $R(\lambda) > R_{\overline{c}}$ and the description of the function to the right of each singularity that we gave in the previous section imply that  
\begin{equation*}
\gamma_{\lambda}(r) > \psi_{\overline{c}}(r), \text{ for all } r \text{ on } (R_{\lambda}, +\infty).
\end{equation*}
Since $\psi_{\overline{c}}$ has a unique singularity, we conclude that the same must also hold for $\gamma_{\lambda}$, and the proof is complete.
\end{proof}

We are now ready to prove that the Morse index of $\Sigma$ is one.

\begin{proof}[Proof of Theorem \ref{thm-A}]
Let $\lambda$ be a negative eigenvalue of the quadratic form associated to the second derivative of the area functional at $\Sigma_R$ with respect to vector fields that vanish at $\Sigma\cap\{|x|=R\}$ and are tangential along $\partial M$. Observe that Corollary \ref{cor-radial} implies that $\lambda$ has multiplicity one and an eigenfunction $u$ of this eigenvalue is radial, i.e. $u = u(r)$, where we used the same letter to denote the function on $\Sigma_R$ and a single variable function. 

In Section \ref{sect-sep-var} we found also equations that are satisfied by $v(r) = \sqrt{r}\cdot u(r)$ and $\gamma(r) = v^{\prime}(r) \cdot v(r)^{-1}$, see equations (\ref{eq-pol-v}), (\ref{eq-pol-v2}), (\ref{eq-pol-gamma1}), and (\ref{eq-pol-gamma2}). The zeroes of $v(r)$, which are isolated, correspond to points at which $\gamma(r)$ is not defined. Throughout this paper, these values are being referred as singularities of $\gamma(r)$. By Corollary \ref{cor-1sing}, we know that if $\gamma$ has a unique singularity, then $v$ vanishes exactly once. Since $v(R)=0$, we conclude that $u(x)>0$, for all points with $r(x)< R$. The only eigenfunction that has a well-defined sign is that associated with the first eigenvalue, which implies that $\lambda$ is the first eigenvalue and the index of $\Sigma_R$ is at most one.

In order to conclude the proof, we consider the solutions $v(r) = v_{\lambda}(r)$ to
\begin{equation*}
v^{\prime\prime}(r) + v(r)\cdot \bigg( \frac{1}{4r^2}  + \frac{m}{r^3}\bigg(1+\frac{m}{2r}\bigg)^{-2} + \lambda \bigg(1+\frac{m}{2r}\bigg)^{4}\bigg) = 0,
\end{equation*}
with boundary conditions
\begin{equation*}
v\bigg(\frac{m}{2}\bigg) = 1 \text{ and } v^{\prime}\bigg(\frac{m}{2}\bigg)=m^{-1}.
\end{equation*}
A straightforward computation verifies that
\begin{equation*}
v_{0}(r) = \sqrt{\frac{2r}{m}}\bigg( 1- \frac{2r-m}{2r+m} \cdot \log \sqrt{\frac{2r}{m}}\bigg).
\end{equation*}
Since this function takes negative values, the same happens for $v_{\lambda}(r)$ with small $|\lambda|$. This implies that the function $v_{\lambda}$ has a zero and corresponds to a radial eigenfunction of some $\Sigma_R$. 
\end{proof}

The last paragraph of our proof has Theorem \ref{cor-max-stab} as a consequence.


\section{Monotonicity formula and boundary length}

The Riemannian connection $\tilde{\nabla}$ of $g$ can be expressed in terms of the Euclidean connection $\nabla$ as follows:
\begin{equation}\label{connection-conformal}
\tilde{\nabla}_X Y = \nabla_X Y + X(\varphi)Y + Y(\varphi)X - \delta(X, Y) \nabla \varphi,
\end{equation}
where $g = e^{2\varphi}\delta$, and $\nabla \varphi$ denotes the gradient of $\varphi$.

\begin{prop}\label{prop-minimal-cone}
Let $\alpha = \alpha(s)$ be a curve parametrized by Euclidean arc-length in $\{x \in \R^3 : |x|=1\}$. The cone over $\alpha$ in the Riemannian Schwarzschild  $(M^3, g)$ will be denoted by
\begin{equation*}
C_{\alpha} = \{t \alpha(s) \in M: s\in domain(\alpha) \text{ and } t\geq m/2\}.
\end{equation*}
Then, $C_{\alpha}$ is a minimal surface if and only if $\alpha$ is a great circle.
\end{prop}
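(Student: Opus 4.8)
The plan is to exploit the conformal relation between $g$ and the Euclidean metric $\delta$ to reduce the minimality of $C_\alpha$ in $(M^3,g)$ to minimality in Euclidean space, and then invoke the classical characterization of minimal cones. First I would use the connection formula (\ref{connection-conformal}) to derive how the mean curvature vector transforms under the conformal change $g = e^{2\varphi}\delta$. For vector fields $X,Y$ tangent to $C_\alpha$, the terms $X(\varphi)Y + Y(\varphi)X$ appearing in (\ref{connection-conformal}) are tangential and hence do not contribute to the normal-valued second fundamental form, so $\tilde{\nabla}_X Y$ and $\nabla_X Y$ differ in their normal parts only by $-\delta(X,Y)(\nabla\varphi)^{\perp}$. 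Since a conformal change preserves the normal line (so that the normal projections with respect to $g$ and $\delta$ agree) and the induced metric on $C_\alpha$ is $e^{2\varphi}$ times the Euclidean one, tracing over a $\delta$-orthonormal tangent frame gives
\[
\vec{H}_g = e^{-2\varphi}\big(\vec{H}_\delta - 2(\nabla\varphi)^{\perp}\big),
\]
where $\vec{H}_g$ and $\vec{H}_\delta$ denote the mean curvature vectors of $C_\alpha$ with respect to $g$ and $\delta$.

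The key observation is that the correction term vanishes on a cone. Indeed, $\varphi = \varphi(r)$ depends only on $r = |x|$, so $\nabla\varphi = \varphi'(r)\,x/|x|$ points in the radial direction. At any point $p = t\alpha(s) \in C_\alpha$, the radial direction $p/|p| = \alpha(s)$ equals $\partial_t\big(t\alpha(s)\big)$, which is tangent to $C_\alpha$. Hence $(\nabla\varphi)^{\perp} = 0$ and $\vec{H}_g = e^{-2\varphi}\vec{H}_\delta$. Consequently $C_\alpha$ is minimal in $(M^3,g)$ if and only if it is minimal as a Euclidean cone.

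It then remains to recall the classical fact that a Euclidean cone over $\alpha$ is minimal precisely when $\alpha$ is a great circle. Parametrizing $C_\alpha$ by $F(s,t) = t\alpha(s)$ with $|\alpha| = |\alpha'| = 1$, one checks that $\{\alpha, \alpha', n\}$ with $n = \alpha\times\alpha'$ is an orthonormal frame, that $N = n$ is the Euclidean unit normal to $C_\alpha$, and that the spherical Frenet relation $\alpha'' = -\alpha + \kappa_g\, n$ holds, where $\kappa_g := \alpha''\cdot n$ is the geodesic curvature of $\alpha$ on the unit sphere. With the induced metric coefficients $E = t^2$, $F = 0$, $G = 1$ and the second fundamental form coefficients $t\kappa_g$, $0$, $0$ with respect to $N$, the Euclidean mean curvature is $H_\delta = \kappa_g/(2t)$. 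Thus $\vec{H}_\delta = 0$ if and only if $\kappa_g \equiv 0$; since the ODE $\alpha'' = -\alpha$ characterizes great circles, this finishes the argument.

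As for the main obstacle: once the reduction is in place there is no deep difficulty, and the whole point is the observation that the gradient of the conformal factor is radial and therefore tangent to the cone, so that the conformal correction to the mean curvature drops out entirely. The two remaining ingredients are routine: the conformal transformation law (requiring only careful bookkeeping of the dimensional factor $2$ and of the fact that the normal projections for $g$ and $\delta$ coincide) and the classical Euclidean cone computation via the spherical Frenet frame.
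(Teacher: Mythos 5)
Your proof is correct and follows essentially the same route as the paper: both arguments rest on the conformal connection formula (\ref{connection-conformal}) together with the observation that $\nabla\varphi$ is radial and hence tangent to the cone, so the conformal correction to the mean curvature vanishes and minimality of $C_\alpha$ in $(M^3,g)$ reduces to Euclidean minimality, which holds precisely when $\alpha$ is a great circle. The only difference is organizational—you package the reduction as the general transformation law $\vec{H}_g = e^{-2\varphi}\big(\vec{H}_\delta - 2(\nabla\varphi)^{\perp}\big)$, whereas the paper carries out the equivalent frame computation directly in the metric $g$.
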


\begin{proof}
The proof of this fact follows via a straightforward computation of the mean-curvature of $C_{\alpha}$ in terms of $\alpha$ and its derivatives. Consider the orthonormal basis $\{e_1, e_2, N\}$ at $t\alpha(s)$ with respect to $g$ given by
\begin{equation*}
e_1 = e^{-\varphi(t)} \alpha(s), e_2 = e^{-\varphi(t)}\alpha^{\prime}(s), \text{ and } N = e^{-\varphi(t)} \alpha(s) \wedge \alpha^{\prime}(s).  
\end{equation*} 
The unit vector $N$ is normal to $C_{\alpha}$. The formula for the connection of $g$, see (\ref{connection-conformal}), gives us that
\begin{equation*}
\tilde{\nabla}_{e_i} N = \nabla_{e_i} N + e_i(\varphi)N,
\end{equation*}
since $N$ is orthogonal to the $e_i$, and, in particular, to the radial direction, while $\nabla \varphi$ is radial. Therefore, the mean-curvature of $C_{\alpha}$ can be computed as follows
\begin{equation*}
H = \sum_{i=1}^2 g(\tilde{\nabla}_{e_i} N, e_i) = \sum_{i=1}^2 g(\nabla_{e_i} N,  e_i) = \delta(\nabla_{\alpha^{\prime}(s)} N, \alpha^{\prime}(s)).
\end{equation*}
The derivative of $N$ in the direction of $e_1$ does not appear in the last term because it is normal to $C_{\alpha}$, since the direction of $N$ depends on $s$ only. Therefore, we conclude that
\begin{equation*}
H(t\alpha(s)) = \frac{1}{t e^{\varphi(t)}} \delta(\alpha(s)\wedge \alpha^{\prime\prime}(s), \alpha^{\prime}(s)).
\end{equation*}
Since $\alpha$ is contained in $|x|=1$ and is parametrized by arc length, we conclude that $\alpha\wedge \alpha^{\prime\prime}$ is parallel to $\alpha^{\prime}$. Therefore, we conclude that $C_{\alpha}$ is minimal if and only if $\alpha^{\prime\prime}$ is proportional to $\alpha$, which is the same as saying that $\alpha$ is a geodesic of the round two-sphere $|x|=1$.  
\end{proof}

Throughout this section, it will be convenient to use other ways to express the Schwarzschild metric. This metric coincides with
\begin{equation*}
\frac{1}{1-\frac{2m}{s}}ds\otimes ds + s^2 g_{S^2}, \text{ on } (2m, \infty)\times S^2,
\end{equation*}
where $g_{S^2}$ is the round metric on the unit two-dimensional sphere. A third way to represent that metric is as
\begin{equation*}
dr\otimes dr + h(r)^2 g_{S^2}, \text{ on } (0, \infty)\times S^2,
\end{equation*}
where $h(r)$ is the inverse function of
\begin{equation*}
r(s) = s\sqrt{1-2ms^{-1}} + m \log \bigg( \frac{1+\sqrt{1-2ms^{-1}}}{1-\sqrt{1-2ms^{-1}}}\bigg).
\end{equation*}

The variable $r = r(x)$ above represents the Schwarzschild distance to the horizon, and not the Euclidean distance; note that this notation is different from that of Section \ref{sect-index}. The function 
\begin{equation}\label{def-function-f}
f(x) = h^{\prime}(r(x))
\end{equation}
is the static potential of the Schwarzschild manifold, i.e. a non-negative function, which vanishes precisely on $\partial M$, and satisfies the equation
\begin{equation*}
Hess_{g} f - (\Delta_g f) g - f Ric_g = 0.
\end{equation*}

One can also check that the static potential satisfies
\begin{equation}\label{asymptotics-hprime}
h^{\prime}(r) = 1 - \frac{m}{r} + o\big(r^{-1}\big), \text{ as } r\rightarrow \infty.
\end{equation}
Indeed, this is a consequence of $h(r) = r + o(r)$ and
\begin{equation}\label{eqtn-derivative h}
h^{\prime}(r) = \sqrt{1- \frac{2m}{h(r)}}. 
\end{equation} 

Another important object that we apply in this work is the conformal vector field
\begin{equation*}
X = h(r) \partial_r, 
\end{equation*}
where $\partial_r$ denotes the unit length radial vector, i.e. the gradient with respect to $g$ of the function $r$. This vector field is conformal because it satisfies $\mathcal{L}_X g = 2 f g$, where $f$ is the function introduced in (\ref{def-function-f}). Moreover, since $X$ is gradient, it follows that
\begin{equation}\label{derivative-X}
\tilde{\nabla}_v X = f  v, \text{ for all } v \in TM.
\end{equation}
It might be interesting for the reader to compare this discussion on the function $f$ and the conformal vector fields in warped products with the content of section 2 of \cite{Bre}.

Let $\Sigma$ be a properly embedded minimal surface in $M$ that meets the boundary of $M$ orthogonally. For every $0<\sigma < \rho$, consider the field $W$ defined by the expression
\begin{equation*}
W(x) = \bigg( \frac{1}{h(\sigma)^2} - \frac{1}{h(\rho)^2}\bigg) X(x), 
\end{equation*}
at points $x \in M$ with $0\leq r(x)\leq \sigma$,
\begin{equation*}
W(x) = \bigg( \frac{1}{h(r)^2} - \frac{1}{h(\rho)^2}\bigg) X(x), 
\end{equation*}
for $\sigma \leq r = r(x)\leq \rho$, and $W(x)=0$, otherwise. Using (\ref{derivative-X}), we can write
\begin{equation*}
div_{\Sigma} W(x) = \bigg( \frac{1}{h(\sigma)^2} - \frac{1}{h(\rho)^2}\bigg) 2f,
\end{equation*}
for $0\leq r(x) < \sigma$, and
\begin{eqnarray*}
div_{\Sigma} W(x) &=& - \frac{2f}{h(\rho)^2} + \frac{2f}{h(r)^2} - \frac{2h^{\prime}(r)}{h(r)^3} g(\partial^{T}_r, X)\\
\nonumber & = & - \frac{2f}{h(\rho)^2} + \frac{2f}{h(r)^2} g(\partial^{\perp}_r, \partial^{\perp}_r),
\end{eqnarray*}
for $\sigma < r(x) < \rho$, where $\partial^{T}_r$ and $\partial^{\perp}_r$ denote the tangential and normal components of $\partial_r$ relative to the tangent spaces of $\Sigma$. And it is clear that the divergence of $W$ over $\Sigma$ vanishes at all points with $r(x)>\rho$.

Therefore, for almost all $0 < \sigma < \rho$, we have
\begin{equation}\label{eq-div-W}
\frac{1}{h(\rho)^2}\int_{\Sigma_{\rho}} f = \frac{1}{h(\sigma)^2}\int_{\Sigma_{\sigma}} f + \int_{\Sigma_{\rho}\setminus \Sigma_{\sigma}} \frac{f}{h^2} |\partial^{\perp}_r|^2_g  - \frac{1}{2}\int_{\Sigma} div_{\Sigma} W,
\end{equation}
where $\Sigma_a= \Sigma\cap B_{a}$, and $B_{a}$ denotes the set of points in $M$ at Schwarzschild distance to the horizon at most $a$. Since $\Sigma$ is minimal, the divergence over $\Sigma$ of the normal component of $W$ vanishes. Therefore, for almost all $0 < \sigma < \rho$, we can simplify (\ref{eq-div-W}) using the divergence theorem, and obtain
\begin{equation*}
\frac{1}{h(\rho)^2}\int_{\Sigma_{\rho}} f = \frac{1}{h(\sigma)^2}\int_{\Sigma_{\sigma}} f + \int_{\Sigma_{\rho}\setminus \Sigma_{\sigma}} \frac{f}{h^2} |\partial^{\perp}_r|^2_g  - \frac{1}{2}\int_{\partial \Sigma} g(W^T, \nu),
\end{equation*}
where $\nu$ denotes the conormal vector along $\partial \Sigma$.

Finally, since $\Sigma$ meets $\partial M$ orthogonally, it follows that $W$ is tangential to $\Sigma$, and $\nu = - \partial_r$. Therefore,  
\begin{equation*}
g(W^T, \nu) = -\bigg( \frac{1}{h(\sigma)^2} - \frac{1}{h(\rho)^2}\bigg) h(0) =  -2m\bigg( \frac{1}{h(\sigma)^2} - \frac{1}{h(\rho)^2}\bigg),
\end{equation*}
and we conclude that, for almost all $0 \leq \sigma < \rho$, we have
\begin{equation}\label{monotonicity-formula}
\frac{\mu (\Sigma\cap B_{\rho})}{h(\rho)^2}= \frac{\mu (\Sigma\cap B_{\sigma})}{h(\sigma)^2}+ \int_{\Sigma_{\rho}\setminus \Sigma_{\sigma}} \frac{f}{h^2} |\partial^{\perp}_r|^2_g  + m\bigg( \frac{1}{h(\sigma)^2} - \frac{1}{h(\rho)^2}\bigg) |\partial \Sigma |,
\end{equation}
where $ |\partial \Sigma |$ represents the boundary length of $\Sigma$, and $\mu$ is the measure defined by $\mu (A) = \int_{A} f$. An approximation argument, and the right-continuity of the integrals involved in (\ref{monotonicity-formula}) imply that it holds for all $0\leq \sigma < \rho$. We have proved the following proposition.

\begin{prop}\label{prop-monotonicity}
Let $(M, g = dr\otimes dr + h(r)^2 g_{S^2})$ denote the Riemannian Schwarzschild manifold of mass $m$, and $f = h^{\prime}(r)$ be its static potential. Let $\Sigma$ be a properly embedded minimal surface in $M$ that meets the boundary of $M$ orthogonally. Then, the ratio
\begin{equation*}
 \frac{1}{h(\rho)^2} \int_{\Sigma\cap B_{\rho}} f(x) d\Sigma(x)
\end{equation*}
is a non-decreasing function of $\rho$ in $[0, \infty)$. Moreover, if $\partial \Sigma$ is a non-trivial curve in the horizon, the ratio above is a strictly increasing quantity.
\end{prop}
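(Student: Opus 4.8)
The plan is to read both conclusions of the proposition directly off the monotonicity identity (\ref{monotonicity-formula}), which has already been established for all $0\leq \sigma<\rho$. Writing $\mu(\Sigma\cap B_\rho)=\int_{\Sigma\cap B_\rho}f$, that identity rearranges into
\begin{equation*}
\frac{\mu(\Sigma\cap B_\rho)}{h(\rho)^2}-\frac{\mu(\Sigma\cap B_\sigma)}{h(\sigma)^2}=\int_{\Sigma_\rho\setminus\Sigma_\sigma}\frac{f}{h^2}\,|\partial^\perp_r|^2_g+m\bigg(\frac{1}{h(\sigma)^2}-\frac{1}{h(\rho)^2}\bigg)|\partial\Sigma|,
\end{equation*}
so the entire matter reduces to checking that both terms on the right are non-negative, and that the second is strictly positive under the extra hypothesis.

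First I would treat the integral term. The factor $|\partial^\perp_r|^2_g$ is manifestly non-negative and $h^2>0$ on $(0,\infty)$, so its sign is governed by $f$; since $f=h'(r)$ is the static potential, which is non-negative and vanishes only on $\partial M$, the whole integrand is $\geq 0$ and hence so is the integral. For the boundary term I would use that $m>0$ and $|\partial\Sigma|\geq 0$, leaving only the claim that $r\mapsto h(r)$ is increasing. This I would extract from (\ref{eqtn-derivative h}): since $h(0)=2m$ and $h'(r)=\sqrt{1-2m/h(r)}$, one gets $h'(r)>0$ for every $r>0$, whence $h(\sigma)<h(\rho)$ and $1/h(\sigma)^2-1/h(\rho)^2>0$ whenever $0\leq\sigma<\rho$. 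Both terms being non-negative, the ratio is non-decreasing; the approximation and right-continuity remark following (\ref{monotonicity-formula}) then upgrades this from almost all to all $\rho\in[0,\infty)$.

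For the strict statement, note that if $\partial\Sigma$ is a non-trivial curve in the horizon then $|\partial\Sigma|>0$, so combined with $m>0$ and the strict inequality $1/h(\sigma)^2>1/h(\rho)^2$ already obtained, the boundary term is strictly positive for every $\sigma<\rho$, forcing the ratio to increase strictly. I do not anticipate a real obstacle at this stage, since the analytic heart of the argument is the already-derived identity (\ref{monotonicity-formula}); the only points demanding care are the strict monotonicity of $h$ and the endpoint $\sigma=0$, where $\Sigma\cap B_0=\partial\Sigma$ has vanishing area measure, so the ratio is well defined and equal to zero there, while $h(0)=2m>0$ keeps the boundary coefficient finite. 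The genuinely delicate step — collapsing the boundary contribution into the clean multiple of $|\partial\Sigma|$ — was settled in the derivation by the free-boundary condition, which makes $W$ tangent to $\Sigma$ with $\nu=-\partial_r$ along $\partial\Sigma$, so here it may simply be invoked.
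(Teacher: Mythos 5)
Your proposal is correct and takes essentially the same route as the paper: the paper's proof of this proposition consists precisely of the derivation of the identity (\ref{monotonicity-formula}) in the text immediately preceding it, after which the proposition is declared proved with the sign checks left implicit. You invoke that same identity and simply spell out what the paper leaves tacit — non-negativity of the integrand since $f \geq 0$, strict monotonicity of $h$ from (\ref{eqtn-derivative h}), and $|\partial \Sigma| > 0$ for the strict case — all of which are accurate.
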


Motivated by the Euclidean setting, we consider the density at infinity of a free-boundary minimal surface in the exact Schwarzschild, defined as the limit of the ratio between the area of the portion of the given surface inside the ball of radius $r$ centered at the horizon, and the area of the totally geodesic section inside the same ball, as $r$ grows to infinity. More precisely, we consider the following notion:

\begin{defi}
We define the density at infinity of a properly embedded free-boundary minimal surface $\Sigma$ in the Schwarzschild manifold by
\begin{equation*}
\Theta (\Sigma) = \lim_{\rho\rightarrow \infty} \frac{area(\Sigma\cap B_{\rho})}{area(C\cap B_{\rho})},
\end{equation*}
whenever this limit exists, where $C$ denotes the cone over a great circle contained in the horizon.
\end{defi}

A simple computation yields $area(C\cap B_r) = 2\pi \int_0^r h(\rho) d\rho$. Next, we compare the density at infinity with the limit of the monotone quantity analyzed in Proposition \ref{prop-monotonicity}.

\begin{prop}\label{prop-density-oo}
Let $\Sigma$ be a properly embedded minimal surface in $M$ that meets the boundary of $M$ orthogonally. Then,
\begin{equation*}
\lim_{\rho\rightarrow \infty}  \frac{1}{\pi h(\rho)^2} \int_{\Sigma\cap B_{\rho}} f(x) d\Sigma(x)\leq \Theta(\Sigma). 
\end{equation*}
Moreover, equality holds whenever the density at infinity of $\Sigma$ is finite.
\end{prop}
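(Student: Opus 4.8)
The plan is to compare the monotone quantity
\[
A(\rho) := \frac{1}{\pi h(\rho)^2}\int_{\Sigma\cap B_{\rho}} f\, d\Sigma
\]
directly with the area ratio defining $\Theta(\Sigma)$, exploiting that the static potential $f = h'(r)$ satisfies $0\le f\le 1$ with $f\to 1$ at infinity. By Proposition \ref{prop-monotonicity} the limit $\lim_{\rho\to\infty}A(\rho)$ already exists in $[0,\infty]$, so the whole content of the statement is to identify this limit with the density at infinity.

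First I would record two elementary facts. From (\ref{eqtn-derivative h}) one has $f = h'(r) = \sqrt{1-2m/h(r)}\in[0,1]$, hence $\int_{\Sigma\cap B_{\rho}} f\le area(\Sigma\cap B_{\rho})$. And from $area(C\cap B_{\rho}) = 2\pi\int_0^{\rho} h$ together with $h(\rho) = \rho + o(\rho)$, an application of L'Hôpital's rule (using $h'\to 1$ from (\ref{asymptotics-hprime})) gives $\dfrac{area(C\cap B_{\rho})}{\pi h(\rho)^2} = \dfrac{2\int_0^{\rho} h}{h(\rho)^2}\to 1$. Combining these,
\[
A(\rho)\le \frac{area(\Sigma\cap B_{\rho})}{\pi h(\rho)^2} = \frac{area(\Sigma\cap B_{\rho})}{area(C\cap B_{\rho})}\cdot\frac{area(C\cap B_{\rho})}{\pi h(\rho)^2},
\]
and letting $\rho\to\infty$ yields $\lim_{\rho\to\infty} A(\rho)\le \Theta(\Sigma)\cdot 1$, which is the asserted inequality.

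For the equality statement, assume $\Theta(\Sigma)<\infty$ and set $B(\rho) := area(\Sigma\cap B_{\rho})/(\pi h(\rho)^2)$. The computation above shows $\lim_{\rho\to\infty} B(\rho) = \Theta(\Sigma)$, so it suffices to prove that the defect $B(\rho) - A(\rho) = \frac{1}{\pi h(\rho)^2}\int_{\Sigma\cap B_{\rho}}(1-f)\, d\Sigma$ tends to $0$. Here I would use that $\Theta(\Sigma)<\infty$ forces the a priori bound $area(\Sigma\cap B_{\rho})\le C\rho^2$ for large $\rho$, and then split the integral at a fixed radius $\rho_0$: the contribution of the compact piece $\Sigma\cap B_{\rho_0}$ (of finite area, by proper embeddedness) is a fixed constant divided by $h(\rho)^2\to\infty$, while on $B_{\rho}\setminus B_{\rho_0}$ the bound $1-f<\varepsilon$ (valid once $\rho_0$ is large, since $f\to 1$) gives a contribution at most $\varepsilon\cdot area(\Sigma\cap B_{\rho})/(\pi h(\rho)^2)\le \varepsilon C'$. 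Letting $\rho\to\infty$ and then $\varepsilon\to 0$ shows $B(\rho)-A(\rho)\to 0$, whence $\lim_{\rho\to\infty}A(\rho) = \Theta(\Sigma)$.

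The two asymptotic limits and the $\varepsilon$-splitting are routine; the step carrying the real weight is the equality case, where one must convert the pointwise decay $1-f = m/r + o(1/r)$ into an estimate that is genuinely sublinear in the area. This is exactly where the finiteness of $\Theta(\Sigma)$ is used, through the area bound $area(\Sigma\cap B_{\rho}) = O(\rho^2)$: without it the defect integral need not be negligible, which is the reason equality may fail once the density at infinity is infinite.
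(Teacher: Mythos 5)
Your proposal is correct and follows essentially the same route as the paper: the inequality via $f\le 1$ together with the asymptotic $area(C\cap B_{\rho})\sim\pi h(\rho)^2$, and the equality case by splitting the integral of $1-f$ at a fixed radius, killing the compact piece with $h(\rho)^{-2}\rightarrow 0$ and the far piece with the smallness of $1-f$ times the bounded area ratio coming from $\Theta(\Sigma)<\infty$. The only cosmetic difference is that the paper quantifies the far-field smallness as $1-f\le c/r_{0}$ and lets $r_{0}\rightarrow\infty$, whereas you use an $\varepsilon$ threshold; these are interchangeable.
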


\begin{proof}
First of all, since $f(r) = h^{\prime}(r)$ we observe that (\ref{eqtn-derivative h}) implies that 
\begin{equation*}
\lim_{\rho\rightarrow \infty} \frac{1}{\pi h(\rho)^2} \int_{\Sigma\cap B_{\rho}} f(x) d\Sigma(x) \leq \liminf_{\rho\rightarrow \infty} \text{ }\frac{area(\Sigma\cap B_{\rho})}{\pi h(\rho)^2}.
\end{equation*}
On the other hand, if $\Theta(\Sigma)$ exists, we have
\begin{equation*}
\lim_{\rho\rightarrow \infty} \frac{area(\Sigma\cap B_{\rho})}{area(C\cap B_{\rho})} = \lim_{\rho\rightarrow \infty} \frac{area(\Sigma\cap B_{\rho})}{\pi h(\rho)^2},
\end{equation*}
and the inequality is proved.

Suppose now that $\Theta(\Sigma)$ is finite. In order to check the second claim in our statement, it suffices to show that 
\begin{equation}\label{eqtn-equality}
\lim_{\rho\rightarrow \infty} \frac{1}{h(\rho)^2} \int_{\Sigma\cap B_{\rho}} (f-1) = 0.
\end{equation}
Observe that there exists $c>0$ such that $|f(x)-1|\leq c\cdot r(x)^{-1}$, for all $r(x)$ large enough, and $-1 \leq f(x)-1 \leq 0$, for all $x$. These are consequences of (\ref{asymptotics-hprime}). Therefore, for $0 < r_0 < \rho < \infty$, we have
\begin{equation*}
0 \leq  \frac{1}{h(\rho)^2} \int_{\Sigma\cap B_{\rho}} (1 - f) \leq \frac{area(\Sigma\cap B_{r_0})}{h(\rho)^2} + \frac{c}{r_0}\cdot \frac{area(\Sigma\cap B_{\rho})}{h(\rho^2)}.
\end{equation*}
Note that, for fixed $r_0$, the right-hand side of this inequality converges to $c\pi \Theta(\Sigma)r_0^{-1}$ as $\rho \rightarrow \infty$. In particular, if the density at infinity of $\Sigma$ is finite, letting $r_0 \rightarrow \infty$ we conclude that (\ref{eqtn-equality}) holds.

\end{proof}

We are now ready to prove the bound on the boundary length stated in Theorem \ref{thm-length-bound}.

\begin{proof}[Proof of Theorem \ref{thm-length-bound}]
Assume, without loss of generality that the density of $\Sigma$ at infinity is finite. Letting $\sigma = 0$ and $\rho \rightarrow \infty$ in the monotonicity formula (\ref{monotonicity-formula}), and applying Proposition \ref{prop-density-oo}, we conclude that
\begin{equation*}
\Theta(\Sigma) = \frac{|\partial \Sigma|}{4\pi m} + \frac{1}{\pi}\int_{\Sigma} \frac{f}{h^2} |\partial^{\perp}_r|^2_g.
\end{equation*}
Since the integral term on the right hand side is non-negative, the inequality in the statement of the theorem is proved. If equality holds, that integral must vanish, which implies that $\partial^{\perp}_r = 0$ on $\Sigma$. In particular, $\Sigma$ must be a minimal cone. By Proposition \ref{prop-minimal-cone}, $\Sigma$ must be the cone over a great circle in the horizon, i.e. a plane through the origin.
\end{proof}

\bibliographystyle{amsbook}

\end{document}